\newcommand{\red}{\color{black}}
\newcommand{\blue}{\color{black}}
\newcommand{\green}{\color{black}}
\newtheorem{theorem}{Theorem}[section]
\newtheorem{lemma}[theorem]{Lemma}
\newtheorem{proposition}[theorem]{Proposition}
\newtheorem{corollary}[theorem]{Corollary}
\theoremstyle{definition}
\newtheorem{remark}[theorem]{Remark}
\DeclareMathOperator{\E}{\mathbb{E}}
\DeclareMathOperator{\Prob}{\mathbb{P}}
\renewcommand{\P}[2][]{\ensuremath{\mathbb{P}_{#1} \left( {#2} \right)}}
\newcommand{\Pkern}[1][]{\ensuremath{{}^{ #1}\!P}}
\newcommand{\Pfs}{\ensuremath{\mathbb{P}\text{-a.s.}}}
\newcommand{\Erw}[2][]{\ensuremath{\mathbb{E}_{#1} \left( {#2} \right)}}
\DeclareMathOperator{\N}{\mathbb{N}}
\DeclareMathOperator{\R}{\mathbb{R}}
\DeclareMathOperator{\C}{\mathbb{C}}
\renewcommand{\S}{\mathcal{S}}
\DeclareMathOperator{\llam}{\lambda\hspace{-5.1pt}\lambda}
\DeclareMathOperator{\supp}{\mathrm{supp}}
\renewcommand{\epsilon}{\varepsilon}
\renewcommand{\rho}{\varrho}
\newcommand{\esl}[1]{\ensuremath{\left( #1 \right)^\sim}}
\newcommand{\1}[1][]{\mathbf{1}_{#1}}
\newcommand{\norm}[1]{\ensuremath{\left\| {#1} \right\|}}
\newcommand{\abs}[1]{\ensuremath{\left| {#1} \right|}}
\newcommand{\skalar}[1]{\langle #1 \rangle}
\begin{document}

\title{Convergence to stable laws for multidimensional stochastic recursions: the case of regular matrices}




\author{Ewa Damek\thanks{University of Wroclaw, Institute of Mathematics, Grunwaldzki 2/4, 50-384 Wroclaw. E.~Damek was partially supported by  MNiSW grant N N201 393937, e-mail: edamek@math.uni.wroc.pl, M.~Mirek was partially supported by MNiSW grant N N201 392337, e-mail: mirek@math.uni.wroc.pl, J.~Zienkiewicz was partially supported by MNiSW grant N N201 397137, e-mail zenek@math.uni.wroc.pl.}, Sebastian Mentemeier\thanks{University of Muenster, Institut f\"ur Mathematische Statistik,  Einsteinstra\ss e 62, 48149 M\"unster.  Research supported by the Deutsche Forschungsgemeinschaft (SFB 878). e-mail: mentemeier@uni-muenster.de}, Mariusz Mirek$^*$\hspace{-.15cm}, Jacek Zienkiewicz$^*$\\ \\ }

\maketitle

\begin{abstract}
Given a sequence $(M_{n},Q_{n})_{n\ge 1}$ of i.i.d.\ random variables with generic copy $(M,Q) \in GL(d, \R) \times \R^d$ , we consider the random difference equation (RDE)
\begin{equation*} \label{RDE:abstract} R_{n}=M_{n}R_{n-1}+Q_{n},\end{equation*} $n\ge 1$, and assume the existence of $\kappa >0$ such that
$$ \lim_{n \to \infty} \left(\E{\norm{M_1 \cdots M_n}^\kappa}\right)^{\frac{1}{n}} = 1 .$$
We prove, under suitable assumptions, that the sequence $S_n = R_1 + \dots + R_n$, appropriately normalized, converges in law to a multidimensional stable distribution with index $\kappa$. As a by-product, we show that the unique stationary solution $R$ of the RDE is regularly varying with index $\kappa$, and give a precise description  of its tail measure. 
\bigskip

\emph{Keywords}: Weak limit theorems, random difference equations, stable laws, stochastic recursions, multivariate regular variation. \\

\emph{AMS 2010 Subject classification}: Primary 60F05; secondary 60J05, 60E07, 60H25.
\end{abstract}

\section{Introduction}

Let $(M_{n},Q_{n})_{n\ge 1}$ be a sequence of i.i.d.\ random variables with generic copy $(M,Q)$ such that $M$ is a real $d\times d$ matrix and $Q$ takes values in $\R^{d}$. Suppose further that
\begin{equation}\label{logmom M}\tag{A1}
\E\log^{+}\|M\|<\infty
\end{equation}
where $\|M\|:=\sup_{|x|=1}|xM|$. Then, with $\Pi_{n}:=M_{1}\cdot...\cdot M_{n}$, there exists $\beta \in [-\infty, \infty)$ such that
\begin{equation*} 
\beta:=\lim_{n\to\infty}n^{-1}\log\|\Pi_{n}\|\quad\Pfs
\end{equation*}
and defines the Liapunov exponent of the RDE
\begin{equation}\label{RDE}
R_{n}=M_{n}R_{n-1}+Q_{n},\quad n\ge 1.
\end{equation}
If $\beta$ is negative and
\begin{equation}\label{logmom Q}\tag{A2}
\E\log^{+}\|Q\|<\infty,
\end{equation}
then this recursive Markov chain has a unique stationary distribution which is given by the law of the almost surely convergent series
\begin{equation}\label{solution to RDE}
R:=\sum_{n\ge 1}\Pi_{n-1}Q_{n}.
\end{equation}This by now standard result may easily be deduced from a more general one for iterations of random Lipschitz maps,
{\blue see e.g. \cite{BP1992}} or \cite{DiacFreed1999} and references given there. Notice
that $R$ can also be characterised as the unique solution to  the
associated stochastic fixed point equation (SFPE),
\begin{equation} \label{SFPE} R \stackrel{d}= MR +Q, \end{equation} where $\stackrel{d}=$ denotes equality in distribution.


Asymptotic properties of $R$ and $R_n$ were first investigated by
Kesten in his seminal paper \cite{Kesten1973}, and they are of
interest in various fields, e.g. financial time series or ARCH
processes that can be considered as a special case of \eqref{RDE},
see \cite{HRRV1989}, \cite{Klueppelberg2004}.
Under suitable assumptions, the asymptotic behavior of $R$ is
governed by the value $\kappa >0$, which is the unique positive
solution to the equation
$$ \lim_{n \to \infty}  \left( \E{\norm{\Pi_n}^\kappa} \right)^{\frac{1}{n}} = 1 .$$

If $M$ is restricted to the set of similarities, this condition
simplifies to $\E\norm{M}^\kappa=1$. Then, Buraczweski et al.
showed that $R$ is regular varying \cite{BDGHU2009}, and that the
sequence \makebox{$S_n := R_1 + \dots + R_n$}, properly
normalized, converges in law to a multidimensional stable
distribution with index $\kappa$ \cite{BDG2010}. See also
\cite{Mirek2010} for a generalization to Lipschitz recursions.

In the present article, we apply these methods, {\red which were developed by Guivarc'h and Le Page in \cite{GL2008}}, to the case where
$M_n$ takes values in $GL(d,R)$, and its distribution powers
satisfy some density and irreducibility assumptions (stated below)
which were considered in \cite{AM2010}. The major ingredient is a
theorem by Basrak et al. \cite{BDM2002} that links regular
variation of $R$ with regular variation of its components
$\skalar{x,R}$. For related results on the tail behavior of
solutions to \eqref{RDE} under varying assumptions on the
distribution of $M$, see \cite{deSaporta2004},{\red\cite{GL2012}}, \cite{LePage1983}, as well as \cite{Goldie1991} for the case $d=1$.

In Section \ref{sect:prior:results} we review the results of
\cite{AM2010} and \cite{BDM2002}. Then we state our main results
in Section \ref{sect:statement}. The proofs are given in Sections
\ref{sect:tail:measure} to \ref{nondeg:section}.

{\red After submission of this paper, we learned about the recent preprint by Gao et al. \cite{GGL2011}. They use more intricate analysis to give similar results under more general conditions and in the case $\kappa =2$.
}

\section{Prior results}\label{sect:prior:results}
The assumptions on $(M,Q)$ considered here were first stated by
Kesten \cite[Theorem 6]{Kesten1973}.  {\red The proof of the
regular behavior of $R$ (in a more general setting) was given by Le Page \cite{LePage1983}, see also the recent preprint \cite{GL2012}.
 Using Kesten's assumptions, a short proof was recently given by Alsmeyer and the second
author by use of a different Markov Renewal Theorem \cite{AM2010}.}
Here we continue their investigations.

 We start with recalling
the main result of \cite{AM2010}. Given $x \in V:= \R^d\setminus\{0\}$, we write
$x^{\sim}$ for its projection on the unit sphere $S:=S^{d-1}$,
thus $x^{\sim} := |x|^{-1} x$. We will consider both column and
row vectors, without special mention as long as this is clear from
the context. Only scalar products are always understood to be
between column vectors. The Lebesgue measure on the space of real
$d \times d$-matrices, seen as $\R^{d^2}$, is denoted as
$\llam^{d^2}$ and the Lebesgue measure on $\R_+$ as $\llam$.
Finally, the open $\delta$-balls in $S$ and $GL(d,\R)$ with
centers $x$ and $A$ are denoted as $B_{\delta}(x)$ and
$B_{\delta}(A)$, respectively.

\begin{theorem}[Theorem 1.1 in \cite{AM2010}] \label{theorem:old}\ \\
Consider the RDE \eqref{RDE} and suppose that, in addition to \eqref{logmom M}, \eqref{logmom Q} and $\beta<0$, the following assumptions hold:
\begin{align}
&\Prob(M\in GL(d,\R))=1.\label{A3}\tag{A3}\\
&\max_{n\ge 1}\,\Prob((v\Pi_{n})^{\sim}\in U)>0\text{ for any $v\in S$ and any open $\emptyset\ne U\subset S$}.\label{A4}\tag{A4}\\
&\Prob(\Pi_{n_{0}}\in\cdot)\ge\gamma_{0}\1[{B_{c}(\Gamma_{0})}]\llam^{d^2}\text{ for some $\Gamma_{0}\in GL(d,\R)$, $n_{0}\in\N$ and $c,\gamma_{0}>0$}.\label{A5}\tag{A5}\\
&\Prob(Mr+Q=r)<1\text{ for any column vector }r\in\R^{d}.\label{A6}\tag{A6}\\
&\text{There exists $\kappa_{0}>0$ such that}\notag\\
&\quad\E\inf_{v\in S}|vM|^{\kappa_{0}} \geq 1,\ \E\|M\|^{\kappa_{0}}\log^{+}\|M\|<\infty\text{ and }0<\E\|Q\|^{\kappa_{0}}<\infty.\label{A7}\tag{A7}
\end{align}
Then there exists a unique $\kappa \in (0, \kappa_0]$ such that
\begin{equation}\label{theorem:main:assertion1}
    \rho(\kappa) :=\lim_{n \to \infty}  \left( \E{\norm{\Pi_n}^\kappa} \right)^{\frac{1}{n}} = 1,
\end{equation}
and
\begin{equation}\label{theorem:main:assertion2}
\lim_{t \to 0} t^{-\kappa}\,\P{v(tR) > 1} = K(v)\quad\text{for all }v\in S,
\end{equation}
where $K$ is a finite positive and continuous function on $S$.
\end{theorem}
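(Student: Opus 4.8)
The plan is to reduce the multidimensional tail problem to a one-dimensional Markov renewal problem via a change of measure governed by a transfer operator on the sphere $S$. For each $s \in [0,\kappa_0]$ I would introduce the positive operator
\[
P^s f(x) = \E\left[ |xM|^s\, f\big((xM)^\sim\big)\right], \qquad x \in S,
\]
acting on an appropriate Banach space of Hölder-continuous functions on $S$. The density assumption \eqref{A5}, together with the irreducibility \eqref{A4} and regularity \eqref{A3}, should guarantee that $P^s$ is quasi-compact with a spectral gap; the Perron--Frobenius--Ruelle theorem (in the spirit of Guivarc'h and Le Page) then yields a dominant eigenvalue $k(s)$, a strictly positive Hölder eigenfunction $e_s$ with $P^s e_s = k(s)\, e_s$, and a unique probability eigenmeasure $\nu_s$ with $(P^s)^\ast \nu_s = k(s)\,\nu_s$. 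A submultiplicativity argument identifies $k(s)$ with $\rho(s) = \lim_n (\E\norm{\Pi_n}^s)^{1/n}$, so controlling the spectral picture simultaneously controls $\rho$.

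For the existence and uniqueness of $\kappa$, I would exploit convexity: the map $s \mapsto \log k(s)$ is convex on $[0,\kappa_0]$ with $\log k(0) = 0$ and derivative at $0$ equal to the Lyapunov exponent $\beta < 0$, while the first inequality in \eqref{A7}, namely $\E\inf_{v\in S}|vM|^{\kappa_0} \geq 1$, forces $k(\kappa_0) \geq 1$ through the estimate $\inf_{v}|v\Pi_n| \geq \prod_{i} \inf_{v}|vM_i|$. Convexity then pins down a single zero $\kappa \in (0,\kappa_0]$ of $\log k$, i.e.\ the unique solution of $\rho(\kappa) = 1$, and yields $(\log k)'(\kappa) > 0$. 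This strictly positive one-sided derivative is precisely what supplies the positive drift needed below.

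The heart of the argument is the tail analysis. Fixing the value $\kappa$ at which $k(\kappa) = 1$, I would perform a Doob-type $h$-transform with $h = e_\kappa$ to define the Markov kernel
\[
\bar P(x, dy) = \frac{1}{e_\kappa(x)}\,\E\left[ |xM|^\kappa\, e_\kappa\big((xM)^\sim\big)\, \1[(xM)^\sim \in dy]\right]
\]
on $S$, together with the associated Markov random walk $\big( x_n,\ \log|v\Pi_n|\big)$. Under the shifted law this walk has mean increment $(\log k)'(\kappa) > 0$, hence drifts to $+\infty$. Writing the stationary solution as $R = \sum_{n\ge 1}\Pi_{n-1}Q_n$ and observing that $vR = \sum_{n}(v\Pi_{n-1})Q_n$, so that large values of $vR$ are produced by a single factor $v\Pi_{n-1}$ that is large and aligned with $v$, I would iterate the fixed point identity $vR \stackrel{d}{=} |vM|\,(vM)^\sim R + vQ$ into a Markov renewal equation for $u(s,v) := \P{vR > s}$ and invoke the Markov Renewal Theorem (in a spread-out, directly Riemann integrable form) to obtain $\lim_{s\to\infty} s^\kappa\,\P{vR > s} = K(v)$, which is exactly \eqref{theorem:main:assertion2} after the substitution $s = 1/t$.

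Finiteness and continuity of $K$ would follow from the Hölder regularity of $e_\kappa$ and $\nu_\kappa$ and from uniform integrability furnished by the moment bounds in \eqref{A7}, while strict positivity would use the non-degeneracy condition \eqref{A6} to exclude the possibility that $vR$ is essentially bounded or supported on a proper affine subspace. The main obstacle I anticipate is twofold: first, establishing quasi-compactness of $P^s$ with a genuine spectral gap on the Hölder space, which is where \eqref{A5} is indispensable and which requires uniform control over all of $S$; and second, verifying the hypotheses of the Markov Renewal Theorem, namely spread-outness of the additive component and direct Riemann integrability of the renewal density. Selecting the correct ``different'' Markov Renewal Theorem that sidesteps the more restrictive conditions of Kesten's original version is exactly the technical payoff of this route.
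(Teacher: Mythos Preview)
The paper does not prove this theorem at all: it is quoted verbatim as Theorem~1.1 of \cite{AM2010} in the ``Prior results'' section, and the surrounding text only points to where the ingredients live in that reference. So there is no in-paper proof to compare your proposal against.

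That said, your sketch lines up accurately with what the paper tells us about the argument in \cite{AM2010}. Your operator $P^s$ is exactly the paper's $T_s$ (cf.\ \eqref{Tkappa0}); its quasi-compactness with simple dominant eigenvalue $1$ at $s=\kappa$ is recorded just before Proposition~\ref{prop!}. The convexity route to the unique $\kappa$ is the content of the remark following the theorem, which cites \cite[Section~5.1]{AM2010} for log-convexity of $\rho$ and the use of \eqref{A7} to force $\rho(\kappa_0)\ge 1$. Your Doob $h$-transform kernel $\bar P$ is the paper's ${}^\kappa\!P$, defined in the proof of Proposition~\ref{prop!} as $r(v)^{-1}T_\kappa(f\cdot r)(v)$ with $r$ the positive eigenfunction. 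And the paper explicitly says the tail limit is obtained in \cite{AM2010} ``by use of a different Markov Renewal Theorem,'' which is precisely your final step; the formula \eqref{K} for $K(v)$ with the drift constant $\alpha$ (your $(\log k)'(\kappa)$) and the stationary measure $\pi$ confirms the renewal-theoretic provenance you describe. In short: your plan is the right one and matches the cited source, but the present paper itself offers nothing beyond these pointers.
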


\begin{remark}
Assumption \eqref{A4} holds in particular if $\Gamma_0 = Id$ in \eqref{A5}. If
$M$ and $Q$ are independent, then \eqref{A6} holds trivially.
Regarding the condition $\E\inf_{v \in S} \abs{vM}^{\kappa_0}$, it
is shown in \cite[Section 5.1]{AM2010} that the function
$\rho(\varkappa)$ is log-convex and $\rho(\delta) < 1$ for some
$\delta >0$. Then this condition asserts that at least
$\rho(\kappa_0) \geq 1$, and so it implies the existence of
$\kappa$.  This is the only place it is needed and so it may
be replaced by any other condition assuring the existence of
$\kappa_0$ such that $\rho(\kappa_0)\geq 1$, compare \cite[Remark
2.8 (iii)]{Klueppelberg2004}. Our theorem is then applicable to the situation of
 \cite{Klueppelberg2004}.
\end{remark}

Next, we note an explicit description of the function $K(x)$. Therefore, we introduce the  operator on continuous functions on the sphere
\begin{equation} T_{\kappa}: \mathcal{C}(S) \to \mathcal{C}(S), \ \  T_\kappa f(v) := \Erw{f(\esl{vM}) \abs{vM}^\kappa}, \label{Tkappa0} \end{equation}
which was studied in \cite[Section 5.1]{AM2010}. The {\red operator
$T_\kappa$ is quasi-compact}, its spectral radius is given by
$\rho(\kappa)=1$, the only eigenvalue with modulus one is 1, and
the corresponding eigenspace is one-dimensional.

\begin{proposition}\label{prop!}
The function $K(v)$ in \eqref{theorem:main:assertion2} is given by
\begin{equation} \label{K} 0 < K(v) = \frac{r(v)}{\alpha \kappa} \int_{S} \frac{1}{r({y})}  \Erw{((yR)^+)^{\kappa}-((yMR)^+)^{\kappa}} \pi(dy) < \infty. \end{equation}
If $\eta$ is the unique distribution such that $\eta T_\kappa =
\eta$, then $r$ is the unique function with the property $T_\kappa
r = r$ and $\int_S r(v) \eta(dv) =1$. Finally, the distribution
$\pi$ is given by $\pi(dv)=r(v) \eta (dv)$, and
$$\alpha = \int_S \frac{1}{r(y)} \Erw{\log{\abs{yM}} r(\esl{yM}) \abs{yM}^\kappa} \pi(dy).$$
\end{proposition}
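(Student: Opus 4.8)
The starting point is the distributional identity $R \stackrel{d}{=} MR + Q$ with $R$ independent of $(M,Q)$, so that for each fixed $v \in S$ the projection satisfies $vR \stackrel{d}{=} vMR + vQ = \abs{vM}\,\esl{vM}R + vQ$. This exhibits $vR$ as a one-dimensional perpetuity driven by the multiplicative random walk $\abs{v\Pi_n}$, modulated by the Markov chain $X_n := \esl{v\Pi_n}$ on the sphere. By Theorem~\ref{theorem:old} the limit in \eqref{theorem:main:assertion2} already exists and, after the change of variable $t \mapsto 1/t$, equals $K(v) = \lim_{t\to\infty} t^{\kappa}\,\P{vR > t}$; it is finite, positive and continuous. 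The task is therefore only to \emph{identify} its value. The plan is to apply an implicit renewal theorem of Goldie type in the Markov-modulated version developed by Guivarc'h and Le Page~\cite{GL2008}, reading off the constant from the potential measure of a suitably tilted random walk.

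To set up the tilting I would use the eigenfunction relation $T_\kappa r = r$ to define the Markov kernel
\[
Pf(v) := \frac{1}{r(v)}\, T_\kappa(fr)(v) = \frac{1}{r(v)}\,\Erw{f(\esl{vM})\, r(\esl{vM})\, \abs{vM}^\kappa},
\]
which satisfies $P\mathbf{1} = 1$ and, using $\eta T_\kappa = \eta$ together with $\int_S r\, d\eta = 1$, admits $\pi(dv) = r(v)\,\eta(dv)$ as its unique invariant probability. The additive functional attached to $vR$ has increments $\log\abs{vM}$, whose stationary mean under $\pi$ is precisely
\[
\alpha = \int_S \frac{1}{r(v)}\,\Erw{\log\abs{vM}\, r(\esl{vM})\,\abs{vM}^\kappa}\,\pi(dv).
\]
The quasi-compactness of $T_\kappa$ recalled before the statement guarantees that $P$ has a spectral gap, so the tilted chain is ergodic, and $\alpha$ equals the leading-eigenvalue derivative $\rho'(\kappa)$. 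Since $\rho$ is convex with $\rho(0)=\rho(\kappa)=1$ and $\rho<1$ in between, we have $\alpha=\rho'(\kappa)>0$, so the tilted walk drifts to $+\infty$ and its renewal (potential) measure is locally finite.

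With this machinery in place I would run Goldie's difference argument in the Markov setting. Writing the $\kappa$-tilted tail of $vR$ and using $vR \stackrel{d}{=} vMR + vQ$ to compare it with that of $vMR$, the increment $vQ$ produces a forcing term equal to the Goldie defect $\Erw{((yR)^{+})^{\kappa} - ((yMR)^{+})^{\kappa}}$, and the comparison turns into a Markov renewal equation on $S\times\R$. Applying the Markov key renewal theorem yields the limit as the $\pi$-average of this defect divided by $\alpha\kappa$, the starting direction entering through the harmonic function $r(v)$; this is exactly the asserted formula \eqref{K}. Finiteness of the defect integral follows from $\Erw{\norm{Q}^{\kappa}}<\infty$ together with the elementary bound on $\abs{a^{\kappa}-b^{\kappa}}$, while positivity is already guaranteed by Theorem~\ref{theorem:old}.

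The main obstacle is the analytic justification of the renewal step. One must verify the hypotheses of the Markov key renewal theorem — a non-lattice (spread-out) condition on the increments $\log\abs{v\Pi_n}$, the required uniform moment bounds, and the direct Riemann integrability of the defect as a function on $S\times\R$ — so that the limit exists with continuous dependence on $v$, and one must control the error incurred in passing from $vR$ to $vMR$ in the difference argument. These regularity inputs rest squarely on the density and irreducibility assumptions \eqref{A3}--\eqref{A5} and on the spectral-gap property of $T_\kappa$, which together supply the smoothing needed for the renewal theorem to apply.
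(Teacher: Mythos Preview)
Your proposal is mathematically sound --- you are sketching the Markov-modulated implicit renewal argument that actually produces formula \eqref{K} --- but the paper's proof is of an entirely different character. The proposition is presented as a \emph{compilation} of results already established in \cite{AM2010}: the formula \eqref{K} is quoted from \cite[Lemmas~6.1 and~6.4]{AM2010}, the eigenfunction $r$ from \cite[Lemma~5.4]{AM2010}, the tilted kernel ${}^\kappa P$ and its stationary distribution $\pi$ from \cite[(18)]{AM2010}, and the expression for $\alpha$ from \cite[Lemma~5.9]{AM2010}. The only argument the paper carries out in-line is the identification $\pi = r\eta$: from ${}^\kappa P$-invariance of $\pi$ one deduces $(r^{-1}\pi)T_\kappa = r^{-1}\pi$, and uniqueness of the $T_\kappa$-invariant measure forces $r^{-1}\pi = \eta$. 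You mention this identity in passing (``admits $\pi(dv)=r(v)\eta(dv)$ as its unique invariant probability'') but do not isolate it as the one thing that needs proving.

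In short, you are reconstructing the machinery of \cite{AM2010} from scratch --- tilting, renewal equation, direct Riemann integrability, non-lattice conditions --- whereas the paper simply invokes that reference and spends two lines on the single relation not made explicit there. Your route is self-contained and explains where the constant comes from; the paper's route is a few citations plus a uniqueness argument. Neither is wrong, but for the purpose of this proposition the renewal-theoretic ``main obstacle'' you flag is not an obstacle at all: it has already been dealt with in the cited paper, and the present proof never re-enters that analysis.
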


\begin{proof}
The formula for $K(v)$ is given in \cite[Lemmas 6.1 and
6.4]{AM2010}. The function $r$ is defined by $T_\kappa r =r$ in
\cite[Lemma 5.4]{AM2010}. Then $\pi$ is defined as the unique
stationary distribution of the Markov chain with transition kernel
${}^\kappa\Pkern$ given by (see \cite[(18)]{AM2010})
${}^\kappa\Pkern f(v) := {r(v)}^{-1} T_\kappa (f \cdot r)(v) $.
Thus
$$ \int_S \bigl(f(v)\cdot r(v)\bigr) \frac{\pi(dv)}{r(v)} = \int_S T_\kappa(f \cdot r) \frac{\pi(dv)}{r(v)} .$$
Since $r$ is continuous and positive, any function $g \in \mathcal{C}(S)$ can be written as $g=f \cdot r$, $ f \in \mathcal{C}(S)$, thus $(r^{-1} \pi) T_\kappa = (r^{-1} \pi)$, and by uniqueness, $\pi = r \eta$. The expression for $\alpha$ may be found in \cite[Lemma 5.9]{AM2010}. 
\end{proof}


The random variable $R$ is said to be regularly varying with index $\kappa$, if there exists a slowly varying function $L$ on $\R_+$ and a Radon measure $\Lambda_\kappa$ on $V$ (called tail measure), such that for all compactly supported continuous $f$ (i.e. $f \in \mathcal{C}_c(V)$)
\begin{equation}
\label{mdreg:var} \lim_{t \to 0} t^{-\kappa} L(t^{-1}) \Erw{f(tR)} = \int_V f(x) \Lambda_\kappa(dx) .
\end{equation}

Basrak et. al. \cite{BDM2002} investigated conditions under which \eqref{theorem:main:assertion2}  already implies that $R$ is regularly varying with index $\kappa$. For noninteger $\kappa$, this holds true, see \cite[Theorem 1.1 (ii)]{BDM2002}. If $\kappa$ is an odd integer, a close inspection of their proof shows that it still goes through, provided the distribution of $R$ is symmetric, i.e.  $\P{R \in \cdot} = \P{-R \in \cdot}$.
This can also be deduced from \cite[Corollary 2]{BL2009} combined with a comment after the proof of \cite[Theorem 3b]{BL2009}. Both \cite{BDM2002}, \cite{BL2009} also give counterexamples, showing that the condition $\kappa \notin \N$ in general may not be omitted.

For future reference, we formulate this result as a proposition (see also \cite[Theorem 4.3]{Klueppelberg2007}).
Note that by \eqref{solution to RDE}, a sufficient condition for
$R$ to have a symmetric distribution is that Q has a symmetric
distribution.

\begin{proposition}\label{prop:reg:var}
Under the assumptions of Theorem \ref{theorem:old}, if
\begin{itemize}
\item $\kappa \notin \N$, or
\item $\kappa$ odd, and $Q$ has symmetric distribution,
\end{itemize}
then $R$ is regularly varying with index $\kappa$. The function $L$ is equal to $\1$.
\end{proposition}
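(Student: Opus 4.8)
The plan is to feed the one-dimensional tail asymptotics supplied by Theorem \ref{theorem:old} into the Cramér--Wold--type criterion of Basrak et al.\ \cite{BDM2002}, which upgrades regular variation of all scalar projections of $R$ to multivariate regular variation of $R$ itself. First I would restate \eqref{theorem:main:assertion2} as honest one-dimensional regular variation. Putting $s=t^{-1}$ and using $v(tR)>1\iff vR>s$, that assertion reads
\begin{equation*}
\lim_{s\to\infty}s^{\kappa}\,\P{vR>s}=K(v),\qquad v\in S,
\end{equation*}
and, applied to $-v$, also $\lim_{s\to\infty}s^{\kappa}\,\P{vR<-s}=K(-v)$. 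Since $K$ is finite, positive and continuous, each projection $vR$ is regularly varying of index $\kappa$ with non-degenerate tails on both sides; moreover the normalisation is the pure power $s^{-\kappa}$, so no genuine slowly varying factor is present.

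For $\kappa\notin\N$ I would then apply \cite[Theorem 1.1(ii)]{BDM2002} directly: regular variation of index $\kappa$ of every $vR$ forces $R$ to be regularly varying of index $\kappa$ in the sense of \eqref{mdreg:var}, with some Radon tail measure $\Lambda_\kappa$ on $V$. Because the scalar limits above hold with the pure power $s^{-\kappa}$ and no slowly varying correction, the function $L$ in \eqref{mdreg:var} may be taken identically equal to $\1$.

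For $\kappa$ an odd integer the reconstruction step of \cite{BDM2002} is obstructed in general, but once $R$---and hence the spectral measure $\sigma$ of its tail measure---is symmetric the obstruction disappears, since the order-$\kappa$ cosine transform $\sigma\mapsto\bigl[v\mapsto\int_S\abs{\skalar{v,\theta}}^{\kappa}\,\sigma(d\theta)\bigr]$ is injective on symmetric measures precisely when $\kappa$ is not an even integer. A re-reading of their proof then goes through, and the same conclusion may alternatively be extracted from \cite[Corollary 2]{BL2009} together with the comment after the proof of \cite[Theorem 3b]{BL2009}. The required symmetry of $R$ is furnished by the symmetry of $Q$ through the series representation \eqref{solution to RDE}, as noted just before the statement.

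I expect the genuine mathematical content---and the one step I would not redo by hand---to be this univariate-to-multivariate passage. Its difficulty is a moment-problem phenomenon: the family $\{K(v)\}_{v\in S}$ must be identified as the collection of projections of a \emph{single} measure $\Lambda_\kappa$ on $V$, and the recovery of $\Lambda_\kappa$ from its projections is exactly what breaks down for integer $\kappa$, which explains both the restriction $\kappa\notin\N$ and the symmetry fix in the odd case. The remaining work on our side is only bookkeeping: the rewriting of \eqref{theorem:main:assertion2} as one-dimensional regular variation, the observation that no slowly varying factor appears so that $L=\1$, and the verification that symmetry of $Q$ makes $R$ symmetric.
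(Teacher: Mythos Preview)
Your proposal is correct and matches the paper's own justification essentially line for line: the paper likewise derives the proposition by citing \cite[Theorem 1.1(ii)]{BDM2002} for $\kappa\notin\N$, observes that the same proof (or alternatively \cite[Corollary 2 and Theorem 3b]{BL2009}) goes through for odd $\kappa$ once $R$ is symmetric, and notes that symmetry of $Q$ forces symmetry of $R$ via \eqref{solution to RDE}. Your added remark on why the odd-integer case works---injectivity of the order-$\kappa$ cosine transform on symmetric measures---is a helpful gloss, but the overall route is identical.
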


\section{Statement of results}\label{sect:statement}

Our first theorem gives more precise information about the tail measure, and extends the class of test functions in \eqref{mdreg:var}. Its proof will be given in section \ref{sect:tail:measure}. By $T_\kappa^*$ we denote the operator
\begin{equation} \label{Tkappa2}
 T_{\kappa}^*: \mathcal{C}(S) \to \mathcal{C}(S), \ \  T_\kappa^* f(v) := \Erw{f(\esl{Mv}) \abs{Mv}^\kappa}.
\end{equation}

\begin{theorem}\label{theorem:vague:convergence}
{\red Under the assumptions of \ref{theorem:old}, let $\kappa \notin \N$ or $\kappa$ odd and $Q$ symmetric.} Then the tail measure $\Lambda_\kappa$ is a product measure $\Lambda_\kappa= \sigma_\kappa \otimes \llam_\kappa$ on {\blue $V= \R^d\setminus\{0\}  $,} where $\sigma_\kappa$ is a finite nonzero measure satisfying  $\sigma_\kappa T_\kappa^* = \sigma_\kappa$, and $\llam_\kappa(ds) :={s^{-(\kappa+1)}} \llam(ds)$.
Moreover, for every $\Lambda_\kappa$-a.e. continuous function $f$, such that
\begin{equation} \sup_{x \in {V}} \abs{x}^{-\kappa} \abs{\log \abs{x}}^{1+\epsilon} \abs{f(x)} < \infty \label{prop:fkonv} \end{equation}
for some $\epsilon >0$, we have
\begin{equation} \lim_{t \to 0} t^{-\kappa} \Erw{f(tR)} = \int_{V} f(x) \Lambda_\kappa(dx) \left( = \int_0^\infty \int_S f(sv) \sigma_\kappa(dv) \frac{1}{s^{\kappa+1}} ds \right).    \label{conv:f} \end{equation}
\end{theorem}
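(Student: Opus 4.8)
The plan is to take the regular variation of $R$ granted by Proposition~\ref{prop:reg:var} as the input---that is, the vague convergence $t^{-\kappa}\P{tR\in\cdot}\to\Lambda_\kappa$ encoded by \eqref{mdreg:var} with $L=\1$---and to read off from it first the structure of $\Lambda_\kappa$ and then the enlarged convergence \eqref{conv:f}. For the product form I would start by deriving the homogeneity of $\Lambda_\kappa$: testing \eqref{mdreg:var} against $x\mapsto f(sx)$ and rescaling the parameter $t$ gives $\int_{V} f(sx)\,\Lambda_\kappa(dx)=s^\kappa\int_{V} f(x)\,\Lambda_\kappa(dx)$ for every $s>0$ and $f\in\mathcal{C}_c(V)$; equivalently, the dilation $x\mapsto sx$ sends $\Lambda_\kappa$ to $s^\kappa\Lambda_\kappa$. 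In polar coordinates $x=(\abs{x},x^\sim)$ this scaling forces $\Lambda_\kappa(\{\abs{x}>a,\ x^\sim\in U\})=a^{-\kappa}\,\sigma_\kappa(U)$ with $\sigma_\kappa(U):=\kappa\,\Lambda_\kappa(\{\abs{x}>1,\ x^\sim\in U\})$, which is exactly $\Lambda_\kappa=\sigma_\kappa\otimes\llam_\kappa$, $\llam_\kappa(ds)=s^{-(\kappa+1)}\llam(ds)$. Finiteness of $\sigma_\kappa$ is clear because $\Lambda_\kappa$ is Radon and $\{\abs{x}\ge1\}$ is bounded away from the origin; nonvanishing follows from $K(v)>0$ in \eqref{theorem:main:assertion2}, since integrating $\Lambda_\kappa$ over the half-spaces $\{\skalar{v,x}>1\}$ yields $K(v)=\kappa^{-1}\int_S(\skalar{v,w}^+)^\kappa\,\sigma_\kappa(dw)$.

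For the invariance I would exploit the fixed-point equation \eqref{SFPE}, $R\stackrel{d}{=}MR+Q$ with $(M,Q)$ independent of $R$. Applying $t^{-\kappa}\Erw{\,\cdot\,}$ to a test function and conditioning on $(M,Q)$, the term $tQ$ is an asymptotically negligible shift while $M$ is a fixed invertible map, so a Breiman-type interchange of limit and expectation---justified by the moment bounds in \eqref{A7}---gives $\Lambda_\kappa=\Erw{M_*\Lambda_\kappa}$, the average of the pushforwards of $\Lambda_\kappa$ under the random linear map $M$. Expanding $\int f(Mx)\,\Lambda_\kappa(dx)$ in polar coordinates via $M(sv)=s\abs{Mv}\esl{Mv}$ and the substitution $u=s\abs{Mv}$ reproduces the radial factor $\llam_\kappa$ and the weight $\abs{Mv}^\kappa$, so that for $\tilde f(w):=\int_0^\infty f(uw)u^{-(\kappa+1)}\,du$ one obtains $\int_S\tilde f\,d\sigma_\kappa=\int_S\Erw{\abs{Mv}^\kappa\tilde f(\esl{Mv})}\,\sigma_\kappa(dv)=\int_S (T_\kappa^*\tilde f)\,d\sigma_\kappa$. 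As $\tilde f$ ranges over all of $\mathcal{C}(S)$ when $f$ varies, this is precisely $\sigma_\kappa T_\kappa^*=\sigma_\kappa$ with $T_\kappa^*$ as in \eqref{Tkappa2}.

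The remaining task, proving \eqref{conv:f} for $\Lambda_\kappa$-a.e.\ continuous $f$ obeying \eqref{prop:fkonv}, I would handle by truncation. Fix a cutoff $\chi_\delta$ equal to $1$ on $\{\delta\le\abs{x}\le1/\delta\}$ and supported in $\{\delta/2\le\abs{x}\le2/\delta\}$, and split $f=f\chi_\delta+f(1-\chi_\delta)$. The product $f\chi_\delta$ is bounded, $\Lambda_\kappa$-a.e.\ continuous and compactly supported in $V$, so the vague convergence and the portmanteau theorem give $t^{-\kappa}\Erw{f\chi_\delta(tR)}\to\int f\chi_\delta\,d\Lambda_\kappa$ for each fixed $\delta$, while $\int\abs{f}(1-\chi_\delta)\,d\Lambda_\kappa\to0$ as $\delta\to0$ because \eqref{prop:fkonv} makes the right-hand side of \eqref{conv:f} absolutely convergent. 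Everything then hinges on bounding the remainder $t^{-\kappa}\Erw{\abs{f(tR)}(1-\chi_\delta)(tR)}$ uniformly in small $t$, and this is the main obstacle.

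To control that remainder I would use \eqref{prop:fkonv} in the form $\abs{f(x)}\le C\abs{x}^\kappa\abs{\log\abs{x}}^{-(1+\epsilon)}$ together with the regularly varying tail $\P{\abs{R}>r}\le Cr^{-\kappa}$ coming from Proposition~\ref{prop:reg:var}, splitting $\{(1-\chi_\delta)(tR)\ne0\}$ into $\{\abs{tR}>1/\delta\}$ and $\{\abs{tR}<\delta\}$. On the outer region, after using $t^{-\kappa}\abs{tR}^\kappa=\abs{R}^\kappa$, an integration by parts against the tail and the substitution $w=\log(1/(t\abs{R}))$ turn the logarithmic weight into an integral $\int w^{-(1+\epsilon)}\,dw\lesssim(\log(1/\delta))^{-\epsilon}$; the inner region is treated identically once the harmless bounded part $\{\abs{R}\le1\}$, contributing $O((\log(1/t))^{-(1+\epsilon)})$, has been removed. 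In both cases it is exactly the exponent $\epsilon>0$ in \eqref{prop:fkonv} that renders these integrals convergent and forces the bound to $0$ as $\delta\to0$, uniformly in $t$; letting $t\to0$ and then $\delta\to0$ assembles the three contributions into \eqref{conv:f}. Beyond this uniform tail estimate with its sharp logarithmic weight, the only other delicate step is the Breiman-type interchange underlying $\Lambda_\kappa=\Erw{M_*\Lambda_\kappa}$, which is where the moment hypotheses of \eqref{A7} are really needed.
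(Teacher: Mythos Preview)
Your proposal is correct and follows essentially the same route as the paper: the product form via the scaling relation in polar coordinates, finiteness and nontriviality of $\sigma_\kappa$ from \eqref{theorem:main:assertion2}, the invariance $\Lambda_\kappa=\Erw{M_*\Lambda_\kappa}$ (the paper makes the ``negligible shift by $tQ$'' precise by working first with $\epsilon$-H\"older test functions, and your Breiman-type interchange is exactly the paper's dominated-convergence step with majorant $\norm{m}^\kappa\norm{f}_\infty\sup_s s^{-\kappa}\P{s\abs{R}>\eta}$), and then $\sigma_\kappa T_\kappa^*=\sigma_\kappa$ via the polar decomposition. For the extended convergence \eqref{conv:f} the paper simply cites \cite[Theorem~2.8]{BDGHU2009}; your truncation-plus-logarithmic-tail-estimate is precisely the argument carried out there.
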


{\red
\begin{remark}
It is interesting to compare this result with \cite[Theorem 1.10]{BDM2010}, which describes the tail measure in the case where the heavy-tail behaviour of $R$ is caused by heavy-tailed input $Q$.
\end{remark}

}

A distribution on $\R^d$ with characteristic function $\Xi$ is called stable, if for all $n \in \N$ there exist $\gamma_n >0$ and $x_n \in \R^d$, such that for all $y \in \R^d$
\begin{equation}
(\Xi(y))^n = \Xi(\gamma_n y) e^{i \skalar{x_n, y}} .
\end{equation}
This holds in particular true, if there is $\kappa >0$ and a function $C_\kappa : S \to \C$, such that for all $s \geq 0$, $v \in S$,
\begin{equation} \label{stable:law:form}
\Xi(sv) = \exp(s^\kappa C_\kappa(v)),
\end{equation}
since then
$$ \bigl(\Xi(sv)\bigr)^n = \exp(n s^\kappa C_\kappa(v)) = \exp( (n^{\frac{1}{\kappa}} s)^\kappa C_\kappa(v)) = \Xi\bigl(( n^{\frac{1}{\kappa}} s) v \bigr) .$$
A distribution on $\R^d$ is fully nondegenerate, i.e. its support is not contained in any lower dimensional (=proper) subspace of $\R^d$ iff the set $\{ y \in \R^d : \abs{\Xi(y)} < 1 \}$ is not contained in a proper subspace. This can easily be seen by considering all onedimensional marginal distributions. If $\Xi$ is of the form \eqref{stable:law:form}, then an equivalent condition is that the set $\{v\in S: \Re C_{\kappa}(v)<0\}$ is not contained in any proper subspace of $\mathbb{R}^d$.

Now we are ready to formulate our main theorem, which will be proved in sections \ref{KLT:section} and \ref{nondeg:section}.

\begin{theorem}\label{theorem:stable:laws}
Under the assumptions of Theorem \ref{theorem:old},  write $R_n^x$ for the $n$-th iteration of \eqref{RDE} started with $R_0=x$, and $S_n^x := \sum_{k=1}^n R_k^x$. Let $W(x)=\sum_{k=1}^\infty M_k\cdot \ldots\cdot M_1 x$, and $h_v(x)= \Erw{e^{i \skalar{v, W(x)}}}$ for $x \in \R^d$ and $v\in S$.  If $\kappa=1$, assume that the distribution of $Q$ is symmetric and set {\green $\xi(t) = \Erw{ \frac{tR}{1+ \abs{tR}^2} }$.}
\begin{itemize}
\item If $\kappa\in(0, 1)\cup(1, 2)$, then there is a sequence $d_n=d_n(\kappa)=nm_{\kappa}$ and a function
$C_{\kappa}:S\mapsto\mathbb{C}$ such that the random variables
$n^{-\frac{1}{\kappa}}\left(S_n^x-d_n\right)$ converge in law to the $\kappa$-stable random variable
with characteristic function of the form \eqref{stable:law:form}, where
\begin{align*}
C_{\kappa}(v)=\int_{{\red V }} \left(\left(e^{i \skalar{v,x}}-1\right) h_v(x)-i\mathbf{1}_{(1,2)}(\kappa)\skalar{v,x} \right) \Lambda_\kappa(dx),
\end{align*}
and $m_{\kappa}=\mathbf{1}_{(1,2)}(\kappa)\mathbb{E}R$.
\item  If $\kappa=1$, then there is a function $C_{1}:S\mapsto\mathbb{C}$ such that the random variables $n^{-1}S_n^x-n\xi(n^{-1})$ converge in
law to the 1-stable distribution with characteristic function of the form \eqref{stable:law:form}, where
\begin{align*}
    C_{1}(v)=\int_{{\red V }}\left(\left(e^{i\langle v, x\rangle}-1\right)h_v(x)-\frac{i\langle v, x\rangle}{1+|x|^2}\right)\Lambda_1(dx).
\end{align*}
\end{itemize}

Moreover, if \begin{equation}
\max_{n\ge 1}\,\Prob((vM_1^\top \cdots M_n^\top)^{\sim}\in U)>0\text{ for any $v\in S$ and any open $\emptyset\ne U\subset S$},\label{A4*}\tag{A4*}
\end{equation} holds,  then the limit laws are fully nondegenerate.
\end{theorem}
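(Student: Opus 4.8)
The plan is to prove the theorem by the method of characteristic functions: setting $\Xi_n(v) := \Erw{\exp\bigl(i\,n^{-1/\kappa}\skalar{v, S_n^x - d_n}\bigr)}$, I will show that $\Xi_n(v) \to \exp(C_\kappa(v))$ for every $v \in \R^d$, which by \eqref{stable:law:form} identifies the limit as the asserted $\kappa$-stable law; the final nondegeneracy claim is handled afterwards. First I would remove the initial condition. Iterating \eqref{RDE} yields
\begin{equation*}
 S_n^x = \sum_{k=1}^n \Pi_k x \;+\; \sum_{j=1}^n N_{j,n}\,Q_j,\qquad N_{j,n} := \sum_{k=j}^n M_k\cdots M_{j+1}.
\end{equation*}
Because $\beta<0$, the series $\sum_{k}\Pi_k x$ converges almost surely (its law is that of $W(x)$), so after division by $n^{1/\kappa}$ it is negligible and, in particular, the limit will not depend on $x$; the same exponential decay lets me replace $N_{j,n}$ by its almost sure limit $N_{j,\infty}$ at negligible cost. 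This reduces the problem to the stationary sequence $(R_k)$, which by Proposition \ref{prop:reg:var} and Theorem \ref{theorem:vague:convergence} is regularly varying with tail measure $\Lambda_\kappa$.

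The heart of the argument is to turn $\log \Xi_n(v)$ into $n$ times a single ``cluster'' contribution. The mechanism is that a large value of $R_k$ is propagated only forward, as $R_{k+\ell}\approx (M_{k+\ell}\cdots M_{k+1})R_k$, with no backward echo (since $\beta<0$ and the heavy tail is produced by the matrix products); the forward echoes sum to a copy of $W$, so that, conditioned on a large value $x$, the value-plus-future sum has characteristic function $e^{i\skalar{v,x}}h_v(x)$ while the future-only sum has characteristic function $h_v(x)$. Subtracting the latter from the former produces exactly the integrand $(e^{i\skalar{v,x}}-1)h_v(x)$ appearing in $C_\kappa$. Making this precise, I would establish
\begin{equation*}
 \log \Xi_n(v) = n\,\Erw{f_v\bigl(n^{-1/\kappa}R\bigr)} + o(1),\qquad f_v(x) := (e^{i\skalar{v,x}}-1)h_v(x) - i\,\mathbf{1}_{(1,2)}(\kappa)\skalar{v,x},
\end{equation*}
the error being controlled by a big-block/small-block (Bernstein) decomposition, whose asymptotic independence of blocks is justified by the geometric mixing of $(R_k)$ coming from the density lower bound \eqref{A5} and the quasi-compactness of $T_\kappa$. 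Since $f_v$ is $\Lambda_\kappa$-a.e.\ continuous and, by the decay of $h_v$ at infinity together with the cancellation of the linear term near the origin, satisfies the growth bound \eqref{prop:fkonv}, Theorem \ref{theorem:vague:convergence} applies with $t=n^{-1/\kappa}$ and gives $n\,\Erw{f_v(n^{-1/\kappa}R)} \to \int_V f_v\,d\Lambda_\kappa = C_\kappa(v)$, as required. I expect this step --- rigorously replacing the dependent sum by independent cluster contributions and pinning down the clustering constant --- to be the main obstacle.

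The centering constants then arise precisely from compensating the first-order term. For $\kappa\in(0,1)$ no centering is needed ($m_\kappa=0$), since $(e^{i\skalar{v,x}}-1)h_v(x)$ is $\Lambda_\kappa$-integrable near the origin; for $\kappa\in(1,2)$ the divergent first moment is removed by $d_n=n\,\E R$, which is reflected in the subtracted term $i\skalar{v,x}$ inside $f_v$; for the boundary case $\kappa=1$ the logarithmic divergence forces the truncated centering $n\,\xi(n^{-1})$, and here the symmetry of $Q$ (hence of $R$) is exactly what makes the surviving imaginary contributions cancel so that the limit exists and takes the stated form of $C_1$.

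Finally, assume \eqref{A4*}. By the criterion following \eqref{stable:law:form} it suffices to show that $\{v : \Re C_\kappa(v) < 0\}$ is not contained in any proper subspace, which I would deduce from the fact that $\supp\sigma_\kappa$ spans $\R^d$: the Lévy measure of the limit is the image of $\Lambda_\kappa$ under the cluster map $x\mapsto (I+W)x$, and $\Re C_\kappa(v)=0$ would force $\skalar{v,(I+W)x}=0$ almost surely for $\Lambda_\kappa$-a.e.\ $x$, hence $v\perp \supp\sigma_\kappa$. Now $\sigma_\kappa$ is the nonzero $T_\kappa^*$-stationary measure and \eqref{A4*} is exactly the irreducibility of the $M$-action on directions governing $T_\kappa^*$; this accessibility property forces $\supp\sigma_\kappa$ to span $\R^d$, so the only $v$ with $\Re C_\kappa(v)=0$ is $v=0$, and the limit law is fully nondegenerate.
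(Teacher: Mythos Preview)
Your route to the convergence differs genuinely from the paper's. You propose a Bernstein big-block/small-block decomposition, using geometric mixing of $(R_n)$ to make blocks asymptotically independent and then identifying the single-block contribution with $\Erw{f_v(n^{-1/\kappa}R)}$. The paper instead uses the Nagaev--Guivarc'h spectral method: it introduces the Fourier-perturbed transition operators $P_{t,v}f(x)=\Erw{e^{i\skalar{tv,R_1^x}}f(R_1^x)}$, invokes the Keller--Liverani theorem to obtain a dominant simple eigenvalue $k(t,v)$ with $P_{t,v}^n\mathbf 1\sim k(t,v)^n$, and then shows
\[
\lim_{t\to 0}\frac{k(t,v)-1-i\skalar{v,tm_\kappa}}{t^\kappa}=\lim_{t\to 0}t^{-\kappa}\Erw{(e^{i\skalar{v,tR}}-1)h_v(tR)}=C_\kappa(v),
\]
the last equality by Theorem \ref{theorem:vague:convergence}. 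Both approaches land on the same final identity, but the eigenvalue expansion replaces your ``main obstacle'' (turning the dependent sum into $n$ times a single term) by a clean algebraic step: the exact relation $(k(t,v)-1)\nu(g_{t,v})=\int(e^{it\skalar{v,x}}-1)g_{t,v}(x)\,\nu(dx)$, together with $g_{t,v}\approx\Delta_t h_v$ coming from the intertwining $T_{t,v}=\Delta_t^{-1}P_{t,v}\Delta_t$. Your blocking argument could in principle be made to work (it is the point-process philosophy of Davis--Hsing and successors), but note that the ``quasi-compactness of $T_\kappa$'' you invoke is on $\mathcal C(S)$ and does not by itself give the mixing of $(R_n)$; one needs the spectral gap of $P$ on a suitable space $\mathcal B\supset\{\mathbf 1\}$, which is exactly what the paper establishes for the spectral method anyway.

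Your nondegeneracy argument has a genuine gap. You assert that the L\'evy measure of the limit is the push-forward of $\Lambda_\kappa$ under $x\mapsto(I+W)x$ and then that $\Re C_\kappa(v)=0$ forces $v\perp\supp\sigma_\kappa$. Neither step is justified: the integrand of $C_\kappa$ is $\Erw{e^{i\skalar{v,x+W(x)}}-e^{i\skalar{v,W(x)}}}$, not $\Erw{e^{i\skalar{v,(I+W)x}}-1}$, so the L\'evy measure is not simply that image measure, and even the correct L\'evy measure being supported in $v^\perp$ would not give $v\perp\supp\sigma_\kappa$. The paper proceeds quite differently. It computes
\[
\Re C_\kappa(v)=C(\kappa)\int_S\Erw{\bigl|\skalar{W^*v+v,w}\bigr|^\kappa-\bigl|\skalar{W^*v,w}\bigr|^\kappa}\,\sigma_\kappa(dw),\qquad C(\kappa)<0,
\]
where $W^*v=\sum_{k\ge 1}M_1^\top\cdots M_k^\top v$, and then observes that $W^*v+v$ is the stationary solution of the transposed RDE $R\stackrel{d}=M^\top R+v$. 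Under \eqref{A4*} Theorem \ref{theorem:old} applies to $(M^\top,v)$, and the strict positivity of Kesten's constant (Proposition \ref{prop!}) yields, via Corollary \ref{corr!2}, that the integral above is strictly positive for every $v\in S$. Thus $\Re C_\kappa(v)<0$ for all $v$, which is stronger than what you aim for and does not go through any support argument for $\sigma_\kappa$. The essential input you are missing is this use of the positivity of $K$ for the transposed system.
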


{\blue The case where $\kappa>2$ has been widely investigated in the  context of Lipschitz maps defined on complete separable metric spaces, see \cite{Be,WW} where the authors exploited martingale methods. On the other hand we refer to \cite{HH1,HH,HP} where, like in our case, the spectral methods play crucial roles.}

\begin{remark} \label{remark!2}
Note that $\eqref{A4}$ is the only assumption that changes when considering the transpose $M^\top$ instead of $M$. So using \eqref{A4*} instead of \eqref{A4}, Theorem \ref{theorem:old} holds for the solution $R^*$ of the SFPE
\begin{equation} \label{SFPE:transpose} R^* \stackrel{d}= M^\top R^* + Q,
\end{equation}
with the same $\kappa$ due to \eqref{theorem:main:assertion1}.
\end{remark}

{\green \begin{remark}
Similar results can be obtained in the case of nonnegative $(M,Q)$, satisfying the assumptions of Kesten \cite[Theorem B]{Kesten1973} - see also \cite{BDG2011,GL2012,Mirek2011} for weaker versions of these assumptions. The proof goes along the same lines, just starting with \cite[Theorem B]{Kesten1973} instead of our Theorem \ref{theorem:old}. The limit laws are supported in the positive cone if $\kappa <1$.
\end{remark} }



\section{On the tail measure} \label{sect:tail:measure}
Proposition \ref{prop:reg:var} yields the regular variation of $R$, given $\kappa \notin \N$, or $\kappa$ odd, and $Q$ having symmetric distribution. Property \eqref{mdreg:var} implies that $\Lambda_\kappa$ is a product measure on $V$ considered as $S \times \R_+$. {\blue We will show that
\begin{align}\label{polar}
\int_{V}f(x)\Lambda_{\kappa}(dx)=\int_{0}^{\infty}\int_{S}f(rx)\sigma_{\kappa}(dx)\frac{dr}{r^{\kappa+1}}.
\end{align}
Indeed, let $\Phi: V\mapsto(0, \infty)\times S$ be
defined as follows: $\Phi(x)=\left(|x|, \frac{x}{|x|}\right)$ and
its inverse $\Phi^{-1}:(0,
\infty)\times S\mapsto V$ by $\Phi^{-1}(r,
z)=rz$. Let us define the measure $\sigma_{\kappa}$ on  $S$
\begin{align*}
\sigma_{\kappa}(A):=\kappa\Lambda_{\kappa}\left(\Phi^{-1}\left[(1, \infty)\times A\right]\right),
\end{align*}
for $A\in\S$. Now on the one hand we have that
\begin{align*}
\int_{V}f(x)\Lambda_{\kappa}(dx)=\int_{(0, \infty)\times S}f(rz)\left(\Lambda_{\kappa}\circ\Phi^{-1}\right)(dr, dz).
\end{align*}
On the other hand for all $A \in \S$ and $s >0$ we have
\begin{multline*} 
\Lambda_\kappa\left(\Phi^{-1}[A \times (s, \infty)]\right) =  \Lambda_\kappa\left(s\Phi^{-1}[A \times (1, \infty)]\right)\\
= \lim_{t \to 0} t^{-\kappa} \P{tR \in s\Phi^{-1}[A \times (1, \infty)]}
=  s^{-\kappa} \lim_{t \to 0}  \left(\frac{t}s \right)^{-\kappa} \P{\frac{t}{s}R \in \Phi^{-1}[A \times (1, \infty)]}\\
=  s^{-\kappa} \Lambda_\kappa\left(\Phi^{-1}[A \times (1, \infty)]\right)  =  \kappa \int_s^\infty \Lambda_\kappa\left(\Phi^{-1}[A \times (1, \infty)]\right) \, t^{-\kappa-1} dt.
\end{multline*}
Thus $\Lambda_\kappa = \sigma_\kappa \otimes \llam_\kappa$, and formula \eqref{polar} follows.}

\begin{lemma}
The measure $\sigma_\kappa$, as defined above, is finite and nonzero.
\end{lemma}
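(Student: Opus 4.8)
The plan is to establish the two assertions separately, reducing both to the radial mass
$\sigma_\kappa(S)=\kappa\,\Lambda_\kappa(\{x:\abs{x}>1\})$, which is the defining formula with $A=S$ since $\Phi^{-1}[(1,\infty)\times S]=\{\abs{x}>1\}$. Beyond the polar decomposition \eqref{polar} and the vague convergence \eqref{mdreg:var} for test functions in $\mathcal{C}_c(V)$, the only input I will use is the one-dimensional tail asymptotics \eqref{theorem:main:assertion2}; the whole point is to convert it into control of the full radial tail of $R$.

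For finiteness, I would first upgrade \eqref{theorem:main:assertion2} to a uniform radial bound. Writing $s=1/t$, \eqref{theorem:main:assertion2} reads $s^\kappa\,\P{\skalar{v,R}>s}\to K(v)$, so taking $v=\pm e_j$ there is $s_0$ with $\P{\abs{\skalar{e_j,R}}>s}\le 2(K(e_j)+K(-e_j))\,s^{-\kappa}$ for $s\ge s_0$. Since $\abs{R}\le\sqrt{d}\,\max_j\abs{\skalar{e_j,R}}$, a union bound produces a constant $C$ with $\P{\abs{R}>s}\le C\,s^{-\kappa}$ for all large $s$; equivalently $t^{-\kappa}\P{\abs{R}\ge 1/(2t)}\le C\,2^\kappa$ for $t$ small. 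Now for each $N$ I choose $\phi_N\in\mathcal{C}_c(V)$ with $\1[\{1\le\abs{x}\le N\}]\le\phi_N\le\1[\{\abs{x}\ge1/2\}]$; then \eqref{mdreg:var} together with the previous bound gives $\Lambda_\kappa(\{1\le\abs{x}\le N\})\le\int\phi_N\,d\Lambda_\kappa=\lim_{t\to0}t^{-\kappa}\Erw{\phi_N(tR)}\le C\,2^\kappa$. Letting $N\to\infty$ and using monotone convergence of measures yields $\Lambda_\kappa(\{\abs{x}\ge1\})\le C\,2^\kappa<\infty$, whence $\sigma_\kappa(S)<\infty$.

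For nonvanishing, I would argue by contradiction. If $\sigma_\kappa=0$ then $\Lambda_\kappa=0$ by \eqref{polar}. Fix $v\in S$; since $\skalar{v,x}\le\abs{x}$, the half space $H_v=\{x:\skalar{v,x}>1\}$ satisfies $H_v\cap\{\abs{x}\le N\}\subset\{1\le\abs{x}\le N\}$. Splitting $\P{tR\in H_v}$ according to $\{\abs{tR}\le N\}$ and $\{\abs{tR}>N\}$, dominating the bounded part by some $\psi\in\mathcal{C}_c(V)$ with $\psi\ge\1[\{1\le\abs{x}\le N\}]$, and bounding the tail part by $t^{-\kappa}\P{\abs{R}>N/t}\le C\,N^{-\kappa}$ from the radial estimate above, I obtain $K(v)=\lim_{t\to0}t^{-\kappa}\P{tR\in H_v}\le\int\psi\,d\Lambda_\kappa+C\,N^{-\kappa}=C\,N^{-\kappa}$. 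Letting $N\to\infty$ forces $K(v)=0$, contradicting the positivity of $K$ asserted in \eqref{theorem:main:assertion2}. Hence $\sigma_\kappa\ne0$.

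The main obstacle is the finiteness: the substance lies in promoting the componentwise (one-dimensional) regular variation coming from \eqref{theorem:main:assertion2} to a genuinely $d$-dimensional, uniform-in-$t$ control of $t^{-\kappa}\P{\abs{R}\ge1/(2t)}$, since only such a bound lets me pass from the compactly supported test functions available in \eqref{mdreg:var} to the unbounded set $\{\abs{x}\ge1\}$ by monotone exhaustion. Once this radial tail bound is in hand, the nonvanishing is comparatively soft, resting only on the same estimate together with $K(v)>0$.
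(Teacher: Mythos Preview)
Your argument is correct and rests on the same core idea as the paper's proof: control the radial tail $\P{\abs{tR}>1}$ from above and below via the half-space tails $\P{\skalar{\pm e_i,tR}>c}$ furnished by \eqref{theorem:main:assertion2}. The difference is one of packaging. The paper invokes directly the identity
\[
\sigma_\kappa(S)=\kappa\,\Lambda_\kappa\bigl(\{|x|>1\}\bigr)=\kappa\lim_{t\to 0}t^{-\kappa}\,\P{\abs{tR}>1},
\]
which is part of the multivariate regular variation established in Proposition~\ref{prop:reg:var} (in the BDM2002 framework the vague convergence is on the compactified punctured space, so $\{\abs{x}>1\}$ is a relatively compact continuity set and Portmanteau applies). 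With this identity in hand, the paper simply sandwiches the limit between $K(x)$ and $\sum_i d^{2\kappa}\bigl(K(e_i)+K(-e_i)\bigr)$ and is done in three lines. You instead take only the $\mathcal{C}_c(V)$-form \eqref{mdreg:var} at face value and reconstruct the same conclusion by hand via a uniform radial bound and compact exhaustion; this is more laborious but also more self-contained, since it does not appeal to the compactification implicit in the regular-variation literature. Both routes yield the same result, and what you identify as the ``main obstacle'' is exactly what the paper absorbs into the citation of \cite{BDM2002}.
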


\begin{proof}
The measure $\sigma_\kappa$ is nonzero, since by \eqref{theorem:main:assertion2}
\begin{align*}\sigma_\kappa(S) = \kappa\Lambda_\kappa((1, \infty)) = & \lim_{t \to 0} t^{-\kappa} \P{\abs{tR} \in (1, \infty)} \\ \geq & \lim_{t \to 0} t^{-\kappa} \P{\skalar{x,tR} \in (1, \infty)} = K(x) >0
\end{align*}
for all $x \in S$. On the other hand, observe that for $e_1, \dots, e_d$ being an orthonormal basis of $\R^d$,
$$ t^{-\kappa}\P{\abs{tR} > 1} \leq \sum_{i=1}^d \Bigl( t^{-\kappa}\P{\skalar{e_i, tR} > d^{-2}} + t^{-\kappa}\P{\skalar{-e_i, tR} > d^{-2}} \Bigr),$$
and the limit of the right hand side is finite again by
\eqref{theorem:main:assertion2}. 
\end{proof}

\begin{remark} \label{Lambda:tight}
This shows in particular, that for all $\epsilon >0$ there exists $C>0$ such that
$$ \Lambda_\kappa(S \times [C, \infty)) = \Lambda_\kappa(B_C(0)^c) < \epsilon.$$
\end{remark}
This is the main ingredient for showing that \eqref{conv:f} holds for the more general class of test functions given in \eqref{prop:fkonv}. The proof goes along the same lines as the proof of Theorem 2.8 in \cite{BDGHU2009} and is therefore omitted.

The next proposition characterizes $\Lambda_\kappa$ as a stationary measure of the Markov Chain on $V$, given by the action of $M$ on $V$. 
\begin{proposition}
For the tail measure $\Lambda_\kappa$, and all $f \in \mathcal{C}_c(V)$,
\begin{equation} \int_{V} f(x) \Lambda_\kappa(dx) = \int_{V} \Erw{f(Mx)} \Lambda_\kappa(dx) . \label{invariant:measure}
\end{equation}
\end{proposition}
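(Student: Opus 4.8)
The plan is to combine the stochastic fixed point equation \eqref{SFPE} with the regular variation of $R$ from Proposition \ref{prop:reg:var}, reading \eqref{invariant:measure} as the statement that two Radon measures on $V$,
\begin{equation*} \mu_t := t^{-\kappa}\,\Prob(tMR \in \cdot), \qquad \nu_t := t^{-\kappa}\,\Prob\bigl(t(MR+Q)\in\cdot\bigr) \end{equation*}
(with $(M,Q)$ independent of $R$), have the same vague limit. By \eqref{SFPE} one has $MR+Q \stackrel{d}= R$, whence $\nu_t(f) = t^{-\kappa}\Erw{f(tR)} \to \int_V f\,d\Lambda_\kappa$ for every $f \in \mathcal{C}_c(V)$ by \eqref{mdreg:var}, i.e. $\nu_t \to \Lambda_\kappa$ vaguely. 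On the other hand I will show $\mu_t(f) \to \int_V \Erw{f(Mx)}\,\Lambda_\kappa(dx)$ and that $\mu_t$, $\nu_t$ are asymptotically equivalent; together these force $\Lambda_\kappa = \Erw{\Lambda_\kappa(M^{-1}\,\cdot\,)}$, which is exactly \eqref{invariant:measure}.

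To identify the limit of $\mu_t$ I would condition on $M$. Since $M \in GL(d,\R)$ almost surely by \eqref{A3}, for each fixed realization $m$ the function $x \mapsto f(mx)$ lies in $\mathcal{C}_c(V)$, its support being the compact annulus $m^{-1}(\supp f) \subset V$; hence \eqref{mdreg:var} yields $t^{-\kappa}\Erw{f(m\,tR)} \to \int_V f(mx)\,\Lambda_\kappa(dx)$ for each $m$. To integrate this convergence over the law of $M$ I invoke dominated convergence: writing $\supp f \subset \{a \le \abs{x} \le b\}$ and using the uniform tail bound $\Prob(\abs{R} > s) \le C\,s^{-\kappa}$ (valid for $s \ge 1$ as a consequence of \eqref{theorem:main:assertion2}), one obtains $t^{-\kappa}\Erw{\abs{f(m\,tR)}} \le C a^{-\kappa}\norm{f}\,\norm{m}^\kappa$, and $\E\norm{M}^\kappa < \infty$ (which follows from $\E\norm{M}^{\kappa_0}\log^{+}\norm{M} < \infty$ and $\kappa \le \kappa_0$) supplies the integrable dominating function. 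This gives $\mu_t(f) = t^{-\kappa}\Erw{f(tMR)} \to \Erw{\int_V f(Mx)\,\Lambda_\kappa(dx)} = \int_V \Erw{f(Mx)}\,\Lambda_\kappa(dx)$.

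It remains to show that $\mu_t$ and $\nu_t$ share their vague limit, i.e. that the additive term $tQ$ is negligible. By the standard convergence-together criterion for tail measures it suffices to check that, for every $\epsilon, \delta > 0$,
\begin{equation*} t^{-\kappa}\,\Prob\bigl(\abs{tQ} > \epsilon,\ \abs{tMR} > \delta\bigr) \longrightarrow 0 \qquad (t \to 0). \end{equation*}
Conditioning on $(M,Q)$ and using the independence of $R$ together with the same uniform tail bound gives
\begin{equation*} t^{-\kappa}\,\Prob\bigl(\abs{tQ} > \epsilon,\ \abs{tMR} > \delta\bigr) \le C\,\delta^{-\kappa}\,\Erw{\norm{M}^\kappa\,\1[{\{t\abs{Q} > \epsilon\}}]}, \end{equation*}
and the right-hand side tends to $0$ by dominated convergence, since $\1[{\{t\abs{Q}>\epsilon\}}] \to 0$ almost surely while $\norm{M}^\kappa$ is integrable. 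As $\nu_t \to \Lambda_\kappa$ and $\mu_t \to \Erw{\Lambda_\kappa(M^{-1}\,\cdot\,)}$, this equivalence identifies the two limits and proves \eqref{invariant:measure}.

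The delicate points are the two interchanges of limit and expectation over the law of $M$, and in both the domination rests on the uniform tail bound for $R$ and, crucially, on $\E\norm{M}^\kappa < \infty$; the lighter tail of $Q$ (encoded in $\E\norm{Q}^{\kappa_0} < \infty$) enters only through $\abs{Q} < \infty$ almost surely. I expect the main obstacle to be the convergence-together step: it must be made rigorous at the level of vague convergence of Radon measures on $V$ (where mass may escape both towards $0$ and towards $\infty$), so that controlling the event $\{\abs{tMR} > \delta\}$ — rather than the more awkward $\{t(MR+Q) \in \supp f\}$ — genuinely suffices. The tightness of $\Lambda_\kappa$ near infinity recorded in Remark \ref{Lambda:tight} is precisely what guarantees that no mass is lost in passing to the limit.
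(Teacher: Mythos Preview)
Your overall strategy---identify $\lim_{t\to 0} t^{-\kappa}\Erw{f(tMR)}$ by conditioning on $M$ and using dominated convergence, then match it with $\lim_{t\to 0} t^{-\kappa}\Erw{f(tR)}$ via the SFPE---is exactly the paper's, and your domination argument for the first step (bound by $C\norm{f}_\infty\norm{m}^\kappa$ and integrate against $\E\norm{M}^\kappa<\infty$) is essentially identical to what the paper writes. The paper handles the matching step differently: it restricts to $\epsilon$-H\"older test functions, shows $t^{-\kappa}\Erw{f(tR)-f(tMR)}\to 0$ directly via a H\"older estimate (deferred to \cite{BDGHU2009}), and then extends to all of $\mathcal{C}_c(V)$ by Stone--Weierstrass density.

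Your probabilistic ``convergence together'' route is a legitimate alternative, but the criterion you state is \emph{not} sufficient, and your explicit claim that the moment hypothesis on $Q$ enters only through $\abs{Q}<\infty$ a.s.\ is wrong. The event your criterion misses is
\[
\bigl\{\abs{tQ}>\epsilon,\ \abs{tMR}\le\delta,\ t(MR+Q)\in\supp f\bigr\}:
\]
there $f(tMR)=0$ while $f\bigl(t(MR+Q)\bigr)$ need not vanish, and the only available bound on its $t^{-\kappa}$-weighted probability is $t^{-\kappa}\,\P{\abs{tQ}\ge a-\delta}$ (where $\supp f\subset\{\abs{x}\ge a\}$). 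For this to tend to zero you need $s^{\kappa}\,\P{\abs{Q}>s}\to 0$, which does follow from $\E\norm{Q}^{\kappa_0}<\infty$ with $\kappa\le\kappa_0$ in \eqref{A7}, but certainly not from mere a.s.\ finiteness of $Q$. Once you add this single estimate---which is precisely the ``more awkward'' contribution you were worried about---your argument closes cleanly; the tightness of Remark \ref{Lambda:tight} is not the relevant issue here. (For fairness: the paper's H\"older argument needs the same moment input on $Q$, hidden inside the reference to \cite{BDGHU2009}.)
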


\begin{proof}
We introduce the space of $\epsilon$-H\"older continuous functions, defined by
\[ H^\epsilon = \{ f \in \mathcal{C}_c(V) \ : \ \sup_{x,y \in {{\red V }}} \frac{\abs{f(x)-f(y)}}{\abs{x-y}^\epsilon} < \infty \} .\]
One can proceed as in the proof of Lemma 2.19 in \cite{BDGHU2009}, to show that if $0 < \epsilon < \min\{1,\kappa\}$, then for all  $f \in H^\epsilon$
\[ \lim_{t \to 0} t^{-\kappa} \Erw{f(tR) - f(tMR)} = 0.  \]
Since $\lim_{t \to 0} t^{-\kappa} \Erw{f(tR)}$ exists, this yields the existence of
$ \lim_{t \to 0} t^{-\kappa} \Erw{f(tMR)}$, and it is equal to $\int_V f(x) \Lambda_\kappa(dx)$.   It remains to show that the limit is also equal to $\int_V \Erw{f(Mx)} \Lambda_\kappa(dx)$ - note that $x \mapsto \Erw{f(Mx)}$ in general has unbounded support. {\red Let $\supp f \subset \overline{B_\eta(0)}^c, \eta >0$. Then for any $t >0$ and $m \in GL(\R,d)$,
\begin{align*}
& t^{-\kappa} \Erw{f(tmR)} \le t^{-\kappa} \norm{f}_\infty \Erw{\1[{\{\abs{tmR} > \eta\}}]} \\
\le &  \norm{m}^\kappa \norm{f}_\infty  \Bigl( t^{-\kappa} \norm{m}^{-\kappa} \P{t \norm{m} \abs{R} > \eta} \Bigr) \\
& \le  \norm{m}^\kappa \norm{f}_\infty \sup_{s >0} s^{-\kappa} \P{s\abs{R}> \eta},
\end{align*}
and as above, using \eqref{theorem:main:assertion2}, $C:=\sup_{s >0} s^{-\kappa} \P{s\abs{R}> \eta} < \infty$, thus
$$ \int \Bigl( \norm{m}^\kappa \norm{f}_\infty \sup_{s >0} s^{-\kappa} \P{s\abs{R}> \eta} \Bigr) \P{M \in dm}= C \norm{f}_\infty \Erw{\norm{M}^\kappa} < \infty.$$
Then we may use the dominated convergence theorem and that  for fixed $m \in GL(d, \R)$, $x \mapsto f(mx)$ has compact support,  to infer
\begin{align*} & \lim_{t \to 0} t^{-\kappa} \Erw{f(tMR)} = \int \lim_{t \to 0} t^{-\kappa} \Erw{f(tmR)} \P{M \in dm} \\ = & \int \int_V f(mx) \Lambda_\kappa(dx) \P{M \in dm} = \int_V \Erw{f(Mx)} \Lambda_\kappa(dx).
\end{align*}}
Finally observe that $H^\epsilon$ is dense in $\mathcal{C}_c(V)$ due to the Stone-Weierstrass theorem, so we infer the assumption for all $f \in \mathcal{C}_c(V)$. 
\end{proof}

\begin{lemma}\label{lemma:sigma}
For the operator $T_\kappa^*$ introduced in \eqref{Tkappa2}, $\sigma_\kappa T_\kappa^*  = \sigma_\kappa.$
\end{lemma}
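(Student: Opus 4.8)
The plan is to transfer the $M$-invariance of the tail measure established in \eqref{invariant:measure} onto its spherical component $\sigma_\kappa$, by testing that identity against functions that separate in polar coordinates. Since $\sigma_\kappa T_\kappa^* = \sigma_\kappa$ means, by the definition of the dual pairing, that $\int_S T_\kappa^* g \, d\sigma_\kappa = \int_S g\, d\sigma_\kappa$ for every $g \in \mathcal{C}(S)$, it suffices to prove this scalar identity. The mechanism that makes it work is that the $M$-action on $V$ sends a point of direction $\esl{x}$ to one of direction $\esl{Mx} = \esl{M\esl{x}}$, which depends only on $\esl{x}$, while stretching the radius by the factor $\abs{M\esl{x}}$; after integrating out the radial variable against $\llam_\kappa(ds) = s^{-(\kappa+1)}\,ds$, this stretching will produce exactly the weight $\abs{Mv}^\kappa$ appearing in $T_\kappa^*$ from \eqref{Tkappa2}.

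Concretely, I would fix $g \in \mathcal{C}(S)$ and choose $\phi \in \mathcal{C}_c((0,\infty))$ with $c_\phi := \int_0^\infty \phi(r)\, r^{-\kappa-1}\,dr \neq 0$; then $f(x) := g(\esl{x})\,\phi(\abs{x})$ lies in $\mathcal{C}_c(V)$, its support being bounded away from both $0$ and $\infty$. By the polar decomposition \eqref{polar},
$$ \int_V f(x)\,\Lambda_\kappa(dx) = c_\phi \int_S g(v)\,\sigma_\kappa(dv). $$
For a fixed realization of $M$, the map $x \mapsto f(Mx)$ also has compact support in $V$, so applying \eqref{polar} to it, writing $f(Mx) = g(\esl{Mv})\,\phi(r\abs{Mv})$ in polar coordinates $x = rv$, and substituting $s = r\abs{Mv}$ in the radial integral (legitimate since $\abs{Mv} > 0$ because $M \in GL(d,\R)$ almost surely) yields
$$ \int_V f(Mx)\,\Lambda_\kappa(dx) = c_\phi \int_S g(\esl{Mv})\,\abs{Mv}^\kappa\,\sigma_\kappa(dv). $$
Taking expectations and interchanging $\Erw{\cdot}$ with the $\sigma_\kappa$-integral then gives $\int_V \Erw{f(Mx)}\,\Lambda_\kappa(dx) = c_\phi \int_S T_\kappa^* g\,\, d\sigma_\kappa$. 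Equating the two sides by \eqref{invariant:measure} and cancelling $c_\phi$ delivers $\int_S g\,\, d\sigma_\kappa = \int_S T_\kappa^* g\,\, d\sigma_\kappa$, which is the claim.

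The argument is essentially bookkeeping, so I do not expect a serious obstacle; the two points demanding care are (i) that $f$ genuinely has compact support in $V$ — this forces $\phi$ to be supported in a compact subinterval of $(0,\infty)$, and is precisely what makes \eqref{invariant:measure} applicable — and (ii) the interchange of expectation and integration, which is justified by Fubini's theorem, since $\sigma_\kappa$ is finite and $\abs{g(\esl{Mv})}\,\abs{Mv}^\kappa \le \norm{g}_\infty \norm{M}^\kappa$ with $\Erw{\norm{M}^\kappa} < \infty$, an integrability fact already exploited in the proof of \eqref{invariant:measure}.
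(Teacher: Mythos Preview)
Your proof is correct and follows essentially the same approach as the paper: apply the $M$-invariance \eqref{invariant:measure} to a function that factors in polar coordinates and use the radial substitution $s \mapsto s\abs{Mv}$ to produce the weight $\abs{Mv}^\kappa$. The only difference is that the paper takes the radial factor to be $\1[{(u,\infty)}]$ rather than a compactly supported $\phi$, which forces a preliminary extension of \eqref{invariant:measure} (via Remark~\ref{Lambda:tight}) to bounded functions supported away from $0$; your choice of $\phi\in\mathcal{C}_c((0,\infty))$ sidesteps that step and lets you work directly within $\mathcal{C}_c(V)$.
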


\begin{proof}
First notice that due to Remark \ref{Lambda:tight}, the identity \eqref{invariant:measure} also holds for bounded continuous functions $f$ on $V$ such that $\supp f \cap B_\eta(0) = \emptyset$ for some $\eta >0$, in particular for functions $g_u(sv)=f(v) \1[{(u, \infty)}](s)$, where $f$ is any continuous function on $S$, and $u >0$. Then
\begin{align*}
 & \int_u^\infty \int_S f(v) \sigma_\kappa(dv) \frac{1}{s^{\kappa +1}} ds = \int_0^\infty \int_S \Erw{f(\esl{Mv})\1[(u, \infty)](s\abs{Mv})} \sigma_\kappa(dv) \frac{1}{s^{\kappa+1}} ds \\
= & \Erw{  \int_S \int_0^\infty f(\esl{Mv}) \1[(u, \infty)](t) \frac{\abs{Mv}^\kappa}{t^{\kappa+1}} dt \sigma_\kappa(dv) }\\
= & \int_u^\infty \int_S \Erw{f(\esl{Mv}) \abs{Mv}^\kappa} \sigma_\kappa(dv) \frac{1}{t^{\kappa+1}} dt .
\end{align*}
Since $u$ is arbitrary, we infer $\int_S f(v) \sigma_\kappa (dv) = \int_S \Erw{f(\esl{Mv}) \abs{Mv}^\kappa} \sigma_\kappa(dv)$ for all $f \in \mathcal{C}(S)$. 
\end{proof}

\section{Convergence to stable laws}\label{KLT:section}

{\red The proof of Theorem \ref{theorem:stable:laws} is based on the spectral method initiated by Nagaev \cite{Nagaev1957} and its first application in this context due to Guivarc'h and Le Page \cite{GL2008}}.  For $t >0$, $v \in S$, we consider Fourier perturbations
$$ \Pkern_{t,v} f(x) = \Erw{e^{i\skalar{tv, Mx+Q}} f(Mx+Q)}=\Erw{e^{i\skalar{tv, R_1^x}} f(R_1^x)}, $$ of $P$, the transition operator of the Markov chain corresponding to \eqref{RDE}. Note that $\Pkern_{0,v} = \Pkern$ for all $v \in S$. Since for $n \geq 1$,
$$ \Pkern^n_{t,v} f(x) = \Erw{e^{i\skalar{tv, S_n^x}}f(R_n^x)}, $$
we are interested in the asymptotics of $\Pkern^n_{t,v}\1$.

The main tool used here is the Keller-Liverani theorem
\cite{KL1999}, which links the properties of $\Pkern$ and
$\Pkern_{t,v}$ on appropriate function spaces for small $t$. The
proof goes along the same lines as in \cite{BDG2010},
\cite{Mirek2010}, so we only give an outline. In particular we do
not check the assumptions of the theorem, which can be copied from
\cite{BDG2010}) for the case $\kappa\in(0, 1)\cup(1, 2)$. The case
when $\kappa=1$ is more technical, the detailed exposition is
contained in \cite{BDM2010}.

First observe that the unique eigenvalue of $\Pkern$, acting on
$\mathcal{C}(\R^d)$, of modulus one is 1, since the corresponding
Markov chain has a unique stationary distribution $\nu = \P{R \in
\cdot}$. On an appropriately chosen function space $\mathcal{B}$
containing $\1$, $\Pkern$ also has a spectral gap, see \cite[3.13
and 3.14]{BDG2010}. Thus $\Pkern$ is quasi-compact, i.e. we have
the decomposition
$$ \Pkern =1 \cdot \Pi  + Q ,$$
where $\Pi$ is a one dimensional projection given by $\Pi f = \nu(f) \1$, the spectral radius of $Q$ is strictly smaller than 1, and both operators commute.

{\red It is now a result of the Keller-Liverani-theorem that, for small t, the operators $\Pkern_{t,v}$, acting on $\mathcal{B}$, are also quasi-compact} with singular dominant eigenvalue $k(t, v)$, such that for all $n \ge 1$
$$ \Pkern_{t,v}^n = k(t,v)^n \Pi_{t,v} + Q_{t,v}^n, $$
where again $\Pi_{t,v}$ is a onedimensional projection operator commuting with $Q_{t,v}$, and the spectral radii of $Q_{t,v}$ are uniformly bounded by some $\rho <1$. Moreover, $k(t,v) \to 1$, as well as $\Pi_{t,v} \to \Pi$, $Q_{t,v} \to Q$ as operators on $\mathcal{B}$, see \cite[3.17 and 3.18]{BDG2010}. The following lemma provides the link to the characteristic functions $$\Xi_{\kappa,n}(sv) = \Erw{e^{i\left\langle sv,  n^{-1/\kappa}(S_n^x-nm_{\kappa})\right\rangle}}$$ of the normalized Birkhoff sums.

\begin{lemma}
Under the assumptions of Theorem \ref{theorem:stable:laws},
with  $t_n:=sn^{-1/\kappa}$ for $n\in\mathbb{N}$, $s>0$ and $v\in S$ we have (provided the right hand side exists)
{\red $$  \lim_{n \to \infty} \Xi_{\kappa,n}(sv) =  \exp\left( s^{\kappa}\cdot\lim_{n \to \infty}\frac{k(t_n, v)-1-i\skalar{v,t_nm_{\kappa}}}{t_n^\kappa} \right) .$$ }
\end{lemma}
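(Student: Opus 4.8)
The plan is to turn the characteristic function $\Xi_{\kappa,n}(sv)$ into an iterate of the Fourier operator and then insert the spectral decomposition. Since $\Pkern^n_{t,v}\1(x) = \Erw{e^{i\skalar{tv, S_n^x}}}$ and $t_n = sn^{-1/\kappa}$, expanding the inner product as $\skalar{sv, n^{-1/\kappa}(S_n^x - nm_\kappa)} = \skalar{t_n v, S_n^x} - n\skalar{v, t_n m_\kappa}$ gives
\begin{equation*}
\Xi_{\kappa,n}(sv) = e^{-in\skalar{v, t_n m_\kappa}}\,\Pkern^n_{t_n,v}\1(x).
\end{equation*}
Substituting $\Pkern^n_{t_n,v}\1 = k(t_n,v)^n \Pi_{t_n,v}\1 + Q_{t_n,v}^n\1$ then separates the dominant eigenvalue contribution from a remainder.

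First I would dispose of the projection and remainder terms. As $t_n \to 0$, the Keller--Liverani estimates quoted above give $\Pi_{t_n,v} \to \Pi$ and $Q_{t_n,v} \to Q$ as operators on $\mathcal{B}$; since $\1 \in \mathcal{B}$ and $\Pi\1 = \nu(\1)\1 = \1$, we obtain $\Pi_{t_n,v}\1(x) \to 1$. Because the spectral radii of $Q_{t_n,v}$ are uniformly bounded by some $\rho < 1$, the norms $\norm{Q_{t_n,v}^n}$ decay geometrically, uniformly in $n$, so $Q_{t_n,v}^n\1(x) \to 0$. Hence the limit of $\Xi_{\kappa,n}(sv)$ equals that of $e^{-in\skalar{v, t_n m_\kappa}} k(t_n,v)^n$, whenever the latter exists.

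The core step is to evaluate
\begin{equation*}
e^{-in\skalar{v, t_n m_\kappa}} k(t_n,v)^n = \exp\Bigl( n\bigl(\log k(t_n,v) - i\skalar{v, t_n m_\kappa}\bigr)\Bigr),
\end{equation*}
using $k(t_n,v) \to 1$. Taylor expanding $\log k(t_n,v) = (k(t_n,v)-1) - \tfrac12(k(t_n,v)-1)^2 + \cdots$ and exploiting the key identity $n t_n^\kappa = s^\kappa$, the leading contribution is
\begin{equation*}
n\bigl(k(t_n,v) - 1 - i\skalar{v, t_n m_\kappa}\bigr) = s^\kappa\,\frac{k(t_n,v) - 1 - i\skalar{v, t_n m_\kappa}}{t_n^\kappa} \longrightarrow s^\kappa C,
\end{equation*}
where $C$ denotes the assumed limit on the right-hand side. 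Exponentiating gives the claimed formula.

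The main obstacle is to control the quadratic error $n(k(t_n,v)-1)^2$ coming from the logarithm, and this is exactly where the restriction $\kappa < 2$ enters. From the existence of $C$ one reads off $k(t_n,v)-1 = i\skalar{v, t_n m_\kappa} + O(t_n^\kappa)$. For $\kappa \in (0,1)$ one has $m_\kappa = 0$, so $k(t_n,v)-1 = O(t_n^\kappa) = O(n^{-1})$ and $n(k(t_n,v)-1)^2 = O(n^{-1}) \to 0$; for $\kappa \in (1,2)$ the centering term dominates, $k(t_n,v)-1 = O(t_n) = O(n^{-1/\kappa})$, whence $n(k(t_n,v)-1)^2 = O(n^{1-2/\kappa}) \to 0$ precisely because $\kappa < 2$ (the same bound controls all higher-order terms of the expansion). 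For $\kappa = 1$ the identical scheme applies after replacing the centering $n m_\kappa$ by $n\xi(n^{-1})$ in the definition of $\Xi_{\kappa,n}$, with the symmetry of $Q$ providing the cancellation that keeps the expansion bounded; this borderline case is the most delicate, and here I would import the refined eigenvalue expansion of \cite{BDM2010}.
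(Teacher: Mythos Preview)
Your proof is correct and follows essentially the same route as the paper: reduce $\Xi_{\kappa,n}(sv)$ to $e^{-in\langle v, t_n m_\kappa\rangle}k(t_n,v)^n$ via the spectral decomposition and the Keller--Liverani bounds on $\Pi_{t_n,v}$ and $Q_{t_n,v}$, then extract the limit from the first-order asymptotics of $k(t_n,v)-1$. The only cosmetic difference is that the paper writes this product as $(1+\tfrac1n a_n)^n$ with $a_n = n\bigl(e^{-it_n\langle v, m_\kappa\rangle}k(t_n,v)-1\bigr)$ and invokes the elementary limit $(1+a_n/n)^n \to e^{\lim a_n}$, whereas you take the logarithm and Taylor-expand $\log k(t_n,v)$; your explicit bounds on the quadratic remainder $n(k(t_n,v)-1)^2$ for $\kappa\in(0,1)$ and $\kappa\in(1,2)$ are precisely what the paper's final expansion step needs as well.
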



\begin{proof}
Using that $\lim_{s \to 0} \Pi_{s, v} \1 (x)= 1$ for all $x \in
\R^d$ and $r\left(Q_{s,v}\right)\le\rho < 1$, for all $v \in S$,
and $s$ small enough, we see
 that $\lim_{n \to \infty} \Xi_{\kappa,n}(tv)$ equals
 \begin{align*}
 & \lim_{n \to \infty} \Erw{e^{it_n\left\langle v,  S_n^x-nm_{\kappa}\right\rangle}}= \lim_{n \to \infty} e^{-int_n\langle v, m_{\kappa}\rangle}\Erw{e^{it_n\left\langle v,  S_n^x\right\rangle}} \\
 = & \lim_{n \to \infty} e^{-int_n\langle v, m_{\kappa}\rangle}\cdot(P^n_{t_n, v} \1 )(x) \\
 = & \lim_{n \to \infty} e^{-int_n\langle v, m_{\kappa}\rangle}\cdot\left( k^n(t_n, v) (\Pi_{t_n, v} \1) (x) + (Q^n_{t_n, v} \1)(x)\right) \\  = & \lim_{n \to \infty} e^{-int_n\langle v, m_{\kappa}\rangle}\cdot k^n(t_n, v).
\end{align*}
Assume for the moment, that $ n \cdot (e^{-it_n\langle v, m_{\kappa}\rangle}\cdot k(t_n, v)-1) $ {\red has a    limit}  as $n$ goes to infinity (this will be shown later).  We infer
\begin{align}
\lim_{n \to \infty} e^{-int_n\langle v, m_{\kappa}\rangle}\cdot k^n(t_n, v) &
 = \lim_{n \to \infty} (1 + \frac1n n \cdot(e^{-it_n\langle v, m_{\kappa}\rangle}\cdot k(t_n, v) -1))^n \nonumber  \\
 & = \exp\left( \lim_{n \to \infty} n \cdot (e^{-it_n\langle v, m_{\kappa}\rangle}\cdot k(t_n, v)-1) \right) \nonumber \\
& = \exp \left( t^{\kappa}\cdot\lim_{n \to \infty}\frac{k(t_n, v)-1-i\skalar{v,t_nm_{\kappa}}}{t_n^\kappa} \right). \label{expansion:ev}
\end{align}
The last line follows from the definition of $t_n$ and $m_\kappa=0$ if $\kappa \in (0,1)$. If $\kappa \in (1,2)$, we expand
$$ n \cdot (e^{-it_n\langle v, m_{\kappa}\rangle}\cdot k(t_n, v)-1) = n \cdot \Bigl( \bigl[1-it_n\skalar{ v, m_{\kappa}}+ o(1/n)\bigr]\cdot \bigl[ (k(t_n, v)-1) +1\bigr]-1\Bigr).$$
In particular, if the limit in \eqref{expansion:ev} exists, also the assumption made on the way is true, and our calculation was justified. 
\end{proof}


Thus we have to study the behaviour of $t^{-\kappa} \bigl( k(t,v) - 1 - i\skalar{v,tm_\kappa} \bigr)$ as $t$ tends to zero.

Let $g_{t,v}$ be an eigenfunction of $P_{t,v}$, corresponding to the dominant eigenvalue $k(t,v)$.
From $\Pkern_{t,v} g_{t,v} = \Pkern e^{it\skalar{v, \cdot}} g_{t,v}$ and $\nu \Pkern = \nu$ it follows that
\begin{align}(k(t,v)-1) \nu(g_{t,v})= \int_{\mathbb{R}^d} (e^{it \skalar{v,x}} -1 ) g_{t,v}(x) \nu(dx). \label{identity:ktv}
\end{align}

Now we need an explicit expression for $g_{t,v}$. It turns out that it is closely related to $h_v$: Denote $\Delta_t f(x) = f(tx)$ for $t>0$. Introducing auxiliary operators
$$ T_{t,v} f(x) = \Erw{e^{i \skalar{v, Mx+tQ}}f(Mx+tQ)} ,$$  we have the identity
\begin{equation} T_{t,v} = \Delta_t^{-1} \circ P_{t,v} \circ \Delta_t . \label{intertwine} \end{equation}
This relates the eigenvalues of both operators for $t>0$. Namely, if $f$ is an eigenfunction of $T_{t,v}$, then $\Delta_t f$ is an eigenfunction of $P_{t,v}$. So it is not surprising that the Keller-Liverani-Theorem holds for the operators $T_{t,v}$, yielding a decomposition
$$ T_{t,v} = k(t,v) \Pi_{T,t,v}  + Q_{T,t,v} ,$$
with $k(t,v) \to k(0,v)$, $\Pi_{T,t,v} \to \Pi_{T,0,v}$.

Now it is easy to check that $h_v(x)= \Erw{e^{i\skalar{v, \sum_{k=1}^\infty M_k \cdots M_1 x}}}$ is the eigenfunction of $T_{0,v}$ corresponding to eigenvalue $k(0,v)=1$, i.e.  $\Pi_{T,0,v} f = c_f \cdot h_v$. Since $\Pi_{T,t,v} h_v \neq 0$ for $t$ small enough {\red and this is an eigenfunction of $T_t{t,v}$ corresponding to the eigenvalue $k(t,v)$,  we have by \eqref{intertwine} that
\begin{equation} \Delta_t \Pi_{T,t,v}h_v = c_t g_{t,v} \label{term:gtv} \end{equation}
for some  $c_t  \neq 0$}. For more details we refer to \cite{BDG2010} and \cite{Mirek2010}.

\begin{lemma}
Under the assumptions of Theorem \ref{theorem:stable:laws}, for all $v \in S$, and $\kappa\in (0,1) \cup (1, 2)$ we have
\begin{align}\label{conv1}
    \lim_{t \to 0} \frac{k(t,v)-1-i\skalar{v,tm_\kappa}}{t^{\kappa}} = \int_{V}\left( \left(e^{i \skalar{v,x}}-1\right) h_v(x) - i\1[(1,2)](\kappa)\skalar{v,x} \right)\Lambda_\kappa(dx).
\end{align}
\end{lemma}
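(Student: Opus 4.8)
The plan is to read the expansion of $k(t,v)$ off the exact identity \eqref{identity:ktv} and the eigenfunction description \eqref{term:gtv}, and then to reduce everything to the regularly varying integrals furnished by Theorem \ref{theorem:vague:convergence}. Since $\nu=\P{R\in\cdot}$, \eqref{identity:ktv} reads $(k(t,v)-1)\nu(g_{t,v})=\Erw{(e^{it\skalar{v,R}}-1)g_{t,v}(R)}$. Fixing the normalisation $c_t\equiv 1$ in \eqref{term:gtv} gives $g_{t,v}(x)=(\Pi_{T,t,v}h_v)(tx)$, and because $\Pi_{T,t,v}h_v\to h_v$ and $h_v(0)=1$, dominated convergence yields $g_{t,v}(R)\to 1$ and $\nu(g_{t,v})\to 1$. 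Thus the prefactor $\nu(g_{t,v})^{-1}$ is harmless and the whole statement reduces to the behaviour of $t^{-\kappa}\Erw{(e^{it\skalar{v,R}}-1)g_{t,v}(R)}$ (minus the centering when $\kappa\in(1,2)$); the strategy is to replace $g_{t,v}(R)$ by $h_v(tR)$ and to apply Theorem \ref{theorem:vague:convergence} to fixed test functions built from $h_v$.

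For $\kappa\in(0,1)$ we have $m_\kappa=0$, and $\Erw{(e^{it\skalar{v,R}}-1)h_v(tR)}=\Erw{f_v(tR)}$ with $f_v(x)=(e^{i\skalar{v,x}}-1)h_v(x)$, which is $O(\abs{x})$ near $0$ and bounded at infinity, hence satisfies \eqref{prop:fkonv} for $\kappa<1$; Theorem \ref{theorem:vague:convergence} then gives $t^{-\kappa}\Erw{f_v(tR)}\to\int_V f_v\,d\Lambda_\kappa$, the right-hand side of \eqref{conv1}. The replacement error is bounded by $\norm{\phi_t}_\infty\,t^{-\kappa}\Erw{\abs{e^{it\skalar{v,R}}-1}}$, where $\phi_t:=\Pi_{T,t,v}h_v-h_v\to 0$ uniformly, and $t^{-\kappa}\Erw{\abs{e^{it\skalar{v,R}}-1}}\to\int_V\abs{e^{i\skalar{v,x}}-1}\,\Lambda_\kappa(dx)<\infty$ (again the theorem, the integrand meeting \eqref{prop:fkonv}), so the error vanishes.

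For $\kappa\in(1,2)$ the centering $it\skalar{v,m_\kappa}=i\Erw{\skalar{v,tR}}$ can be pulled inside the expectation against $\nu(g_{t,v})$, producing the numerator $\Erw{(e^{it\skalar{v,R}}-1-it\skalar{v,m_\kappa})g_{t,v}(R)}$. Splitting the bracket via $it\skalar{v,R}$ into $[e^{it\skalar{v,R}}-1-it\skalar{v,R}]+it\skalar{v,R-m_\kappa}$ gives $N_1+it\,\mathrm{Cov}(\skalar{v,R},g_{t,v}(R))$. In $N_1$ the bracket is $O(t^2\abs{R}^2)\wedge O(t\abs{R})$, so its $\phi_t$-replacement error is genuinely $O(t^\kappa\norm{\phi_t}_\infty)$ (the quadratic bound on $e^{i\theta}-1-i\theta$ together with truncated moments supplies the extra power), while $t^{-\kappa}N_1\to\int_V(e^{i\skalar{v,x}}-1-i\skalar{v,x})h_v(x)\,\Lambda_\kappa(dx)$ by the theorem (the integrand being $O(\abs{x}^2)$ at $0$ and $O(\abs{x})$ at infinity). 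In the covariance every additive constant in $g_{t,v}$ cancels; replacing $g_{t,v}$ by $h_v$ and using $F(x)=\skalar{v,x}(h_v(x)-1)$, which satisfies \eqref{prop:fkonv} because $h_v(x)-1=O(\abs{x})$ near $0$ (this uses $\Erw{\norm{\mathcal{W}}}<\infty$ for the a.s.\ finite sum $\mathcal{W}=\sum_{k\ge 1}M_k\cdots M_1$, itself a consequence of $\rho(1)<1$ as $\kappa>1$), one obtains $it^{1-\kappa}\mathrm{Cov}(\skalar{v,R},h_v(tR))\to i\int_V\skalar{v,x}(h_v(x)-1)\,\Lambda_\kappa(dx)$. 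Adding the two limits reconstitutes exactly the right-hand side of \eqref{conv1}.

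The main obstacle is the covariance remainder $it^{1-\kappa}\mathrm{Cov}(\skalar{v,R},\phi_t(tR))$ for $\kappa\in(1,2)$: a bare supremum bound only gives $O(t^{1-\kappa}\norm{\phi_t}_\infty)$, which diverges. Since additive constants drop from the covariance, only the modulus of continuity of $\phi_t$ near the origin enters, but pairing it against $\skalar{v,R}$ still forces $\Pi_{T,t,v}h_v\to h_v$ to hold at a rate (in a norm dominating an $\epsilon$-Hölder modulus) strong enough to beat $t^{\kappa-1}$. This quantitative convergence of the eigenprojection is precisely what the quantitative Keller--Liverani theorem, set up as in \cite{BDG2010}, \cite{Mirek2010} (and \cite{BDM2010} at the borderline $\kappa=1$), delivers, and the remaining routine verifications follow those references.
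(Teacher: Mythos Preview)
Your argument is correct and follows the same overall strategy as the paper: start from the eigenvalue identity \eqref{identity:ktv}, use \eqref{term:gtv} to replace $g_{t,v}$ by $\Delta_t h_v$, and reduce to Theorem~\ref{theorem:vague:convergence} applied to fixed test functions.

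The organization differs in two respects that are worth noting. First, the paper disposes of the replacement error in a single stroke \emph{before} any decomposition: it quotes from \cite{Mirek2010} that
\[
\int_{\R^d}(e^{it\skalar{v,x}}-1)\bigl(\Delta_t\Pi_{T,t,v}h_v(x)-\Delta_t h_v(x)\bigr)\,\nu(dx)=o(t^\kappa)
\quad\text{and}\quad
\nu(\Delta_t\Pi_{T,t,v}h_v)-1=o(t^\kappa),
\]
valid for all $\kappa\in(0,2)$. Everything thereafter involves only $h_v$, so no ``covariance remainder'' ever appears. Your route instead decomposes first (into $N_1$ and the covariance) and replaces afterwards, which is why you have to invoke the quantitative Keller--Liverani rate separately for the covariance term; but this is exactly the same estimate, just consumed in a different place. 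Second, for $\kappa\in(1,2)$ the paper splits the integrand as
\[
(e^{i\skalar{v,x}}-1)h_v(x)-i\skalar{v,x}
=(e^{i\skalar{v,x}}-1)(h_v(x)-1)+(e^{i\skalar{v,x}}-1-i\skalar{v,x}),
\]
whereas you split it as $(e^{i\skalar{v,x}}-1-i\skalar{v,x})h_v(x)+i\skalar{v,x}(h_v(x)-1)$. Both pairs of test functions satisfy \eqref{prop:fkonv} (using your observation $h_v(x)-1=O(|x|)$ from $\rho(1)<1$), and the two decompositions are algebraically identical. A small bonus of your organization is that pulling the centering $it\skalar{v,m_\kappa}$ inside the numerator before dividing by $\nu(g_{t,v})$ makes the bare convergence $\nu(g_{t,v})\to 1$ sufficient, whereas the paper's ordering needs the rate $\nu(g_{t,v})-1=o(t^\kappa)$, which it takes from \cite{Mirek2010}.
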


\begin{proof}
Using \eqref{term:gtv} in \eqref{identity:ktv}, yields
\begin{align*} (k(t,v)-1) \nu( \Delta_t \Pi_{T,t,v}h_v)= \int_{\mathbb{R}^d} (e^{it \skalar{v,x}} -1 )  \Delta_t \Pi_{T,t,v}h_v(x) \nu(dx). \end{align*}
Arguing in a similar way as in  (\cite[Section 6]{Mirek2010}), we
obtain that
\begin{equation}\label{reason:for:T} \int_{\R^d} (e^{it\skalar{v, x }}-1) \cdot \bigl(\Delta_t \Pi_{T,t,v} h_v(x) - \Delta_t h_v(x)\bigr) \nu(dx) = o(t^\kappa) \end{equation}
as well as $\bigl(\nu(\Delta_t \Pi_{T,t,v} h_v) -1\bigr) =
o(t^\kappa)$. Thus
\begin{align} \lim_{t \to 0} \frac{k(t,v)-1}{t^\kappa} = & \lim_{t \to 0} t^{-\kappa} \int_{\mathbb{R}^d} (e^{it \skalar{v,x}} -1 )  \Delta_t h_v(x) \nu(dx) \nonumber \\ = & \lim_{t \to 0} t^{-\kappa} \Erw{\bigl(e^{i\skalar{v,tR}} - 1\bigr) h_v(tR) }. \label{lim:1} \end{align}
If $\kappa \in (0,1)$,  checking that $(e^{i\skalar{v,\cdot}} -
1\bigr) h_v$ satisfies \eqref{prop:fkonv} the proof follows. If
$\kappa \in (1,2)$, we write
\begin{align*}&   \int_{\mathbb{R}^d} (e^{it \skalar{v,x}} -1 )  \Delta_t h_v(x)  \nu(dx)  \\
= &  \int_{\mathbb{R}^d} (e^{it \skalar{v,x}} -1 ) ( \Delta_t
h_v(x) -1)  \nu(dx) + \int_{\R^d} (e^{it\skalar{v,x}} -1 -
i\skalar{v, tx}) \nu(dx) + i \skalar{v, tm_\kappa}.
\end{align*} We check that the functions under the integral
satisfy \eqref{prop:fkonv} and then we use \eqref{lim:1}.
 For more details we refer to (\cite[Section 5]{BDG2010}). 
\end{proof}
{\green This proves the pointwise convergence of $ \Xi_{\kappa,n}(sv)$ to $\exp \left( s^\kappa C_\kappa(v) \right)$. Continuity  at 0 follows from the dominated convergence theorem.}

\section{The limit measure is nondegenerate}\label{nondeg:section}
In this section we will assume additionally that assumption
\eqref{A4*} is in force. As mentioned in Remark \ref{remark!2},
all results of \cite{AM2010} then hold for the SFPE given by
$(M^\top, Q)$.

We start with a corollary to Proposition \ref{prop!}. Note that
$\sigma$ here is a measure on $S$ seen as set of column vectors.
\begin{corollary} \label{corr!}
If $R$ is the solution to the RDE $R \stackrel{d}= M^\top R +Q$,
then
\begin{equation} \label{K2} 0 < \int_{S} \Erw{(\skalar{R,y}^+)^{\kappa}-(\skalar{M^\top R,y}^+)^{\kappa}}\sigma(dy) < \infty, \end{equation}
where $\sigma$ is (up to scalar multiplication) the unique
solution to $\sigma T_\kappa^* = \sigma$, with $T_\kappa^*$ as
defined in \eqref{Tkappa2}. \end{corollary}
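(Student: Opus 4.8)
The plan is to transfer the positivity-and-finiteness statement of Proposition \ref{prop!}, applied to the transposed system $(M^\top, Q)$, into the language of the measure $\sigma_\kappa$ (equivalently $\sigma$) solving $\sigma T_\kappa^* = \sigma$. First I would invoke Remark \ref{remark!2}: under \eqref{A4*}, all the hypotheses of Theorem \ref{theorem:old} hold for the SFPE $R \stackrel{d}= M^\top R + Q$, with the \emph{same} $\kappa$ by \eqref{theorem:main:assertion1}, since $\rho(\kappa)$ is unchanged when $\Pi_n$ is replaced by $\Pi_n^\top$. Hence Proposition \ref{prop!} applies verbatim to this transposed system and yields
\begin{equation*}
0 < K(v) = \frac{r(v)}{\alpha\kappa} \int_S \frac{1}{r(y)} \Erw{(\skalar{R,y}^+)^\kappa - (\skalar{M^\top R, y}^+)^\kappa}\, \pi(dy) < \infty,
\end{equation*}
where now $r$, $\pi$, $\eta$, $\alpha$ are the objects associated to the operator $T_\kappa$ built from $M^\top$, i.e.\ $T_\kappa f(v) = \Erw{f(\esl{vM^\top})\abs{vM^\top}^\kappa}$.

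The key observation I would use is that this $T_\kappa$ for the transposed system coincides with the operator $T_\kappa^*$ of \eqref{Tkappa2} for the original system: indeed $vM^\top = Mv$ for a column vector $v$ (reading $vM^\top$ as the action producing a column vector), so $T_\kappa f(v) = \Erw{f(\esl{Mv})\abs{Mv}^\kappa} = T_\kappa^* f(v)$. Consequently the stationary distribution $\pi = r\eta$ of the associated Markov chain satisfies $\pi T_\kappa^* = \pi$ after the change of variable $g = f\cdot r$, exactly as in the proof of Proposition \ref{prop!}; thus $\pi$ (and hence $r^{-1}\pi = \eta$, or any positive-multiple thereof) is, up to scalar multiplication, the unique solution of $\sigma T_\kappa^* = \sigma$. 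I would then set $\sigma(dy) := \frac{1}{r(y)}\pi(dy) = \eta(dy)$, or more conveniently absorb the factor $r(v)/(\alpha\kappa)$ and the density $1/r(y)$ so that the integral in \eqref{K2} is read directly against this $\sigma$.

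The remaining step is purely formal: since $r(v)/(\alpha\kappa) > 0$ is a fixed positive constant and $K(v)$ is finite and strictly positive by Proposition \ref{prop!}, dividing out this prefactor shows
\begin{equation*}
0 < \int_S \frac{1}{r(y)}\Erw{(\skalar{R,y}^+)^\kappa - (\skalar{M^\top R, y}^+)^\kappa}\,\pi(dy) < \infty,
\end{equation*}
and substituting $\sigma(dy) = r(y)^{-1}\pi(dy)$ gives precisely \eqref{K2}. I expect the only genuine point requiring care—the main obstacle—to be the bookkeeping identifying $\sigma$ up to scalar multiplication: one must check that $r^{-1}\pi$ indeed solves $\sigma T_\kappa^* = \sigma$ and that uniqueness (from the one-dimensionality of the modulus-one eigenspace of the quasi-compact $T_\kappa^*$, noted after \eqref{Tkappa0}) pins it down, so that the positivity of $K$ is not accidentally destroyed by the normalization. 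Everything else is a direct reading-off from Proposition \ref{prop!} applied to the transposed recursion.
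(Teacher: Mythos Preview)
Your proposal is correct and follows essentially the same route as the paper: apply Proposition \ref{prop!} to the transposed system $(M^\top,Q)$ under \eqref{A4*}, identify the operator $T_\kappa$ of that system with $T_\kappa^*$ via the row/column switch, and then read off $\sigma(dy)=r(y)^{-1}\pi(dy)=\eta(dy)$ together with the positivity of $r$ to strip the prefactor from $K$. The only minor point is your phrasing ``$vM^\top=Mv$ for a column vector $v$'': the paper is more careful here, passing through the row-vector operator $\hat T_\kappa f(v)=\Erw{f((vM^\top)^\sim)\abs{vM^\top}^\kappa}$ to make the identification with $T_\kappa^*$ precise, but this is bookkeeping and your intent is clear and correct.
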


\begin{proof} Given \eqref{A4*}, Proposition \ref{prop!} holds also for the RDE given by $(M^\top, Q)$, mutatis mutandis.
Observe that considering $T_\kappa^*$ on functions of column vectors is equivalent to considering the operator $\hat{T}_\kappa$
$$ \hat{T}_\kappa f(v) = \Erw{f\bigl( (vM^\top)^\sim\bigr) |vM^\top|^\kappa}$$
on functions of row vectors. But $\hat{T_\kappa}$ is the right expression for $T_\kappa$ of \eqref{Tkappa0} for the RDE given by $(M^\top, Q)$. In particular the only eigenvalue of $T_\kappa^*$ of modulus one  is 1, and the corresponding eigenspace is one-dimensional. Finally use that $r$ is positive, and that $r(v)^{-1} \pi(dv) = \sigma (dv)$. 
\end{proof}

The following nice observation is the main ingredient in the proof of nondegeneracy:

\begin{corollary} \label{corr!2} For $\sigma_\kappa$ as defined in Theorem \ref{theorem:vague:convergence}, for $R$ being solution to $R\stackrel{d}= M^\top R +Q$,
\begin{equation} \label{nondeg0}
 0 < \int_{S} \Erw{(\skalar{M^\top R+Q,y}^+)^{\kappa}-(\skalar{M^\top R,y}^+)^{\kappa}}\sigma_\kappa(dy) < \infty
\end{equation}
\end{corollary}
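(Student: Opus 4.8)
The plan is to read off \eqref{nondeg0} from Corollary \ref{corr!}, once I check that the two integrands agree for each $y\in S$ and that the measures $\sigma$ and $\sigma_\kappa$ are proportional. Both reductions are short: the first rests on the fixed point equation $R\stackrel{d}=M^\top R+Q$, the second on the uniqueness of the invariant measure of $T_\kappa^*$.

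For the integrands, I would argue as follows. Applying the functional $\skalar{\cdot,y}$ to $R\stackrel{d}=M^\top R+Q$ gives $\skalar{R,y}\stackrel{d}=\skalar{M^\top R+Q,y}$ for each fixed $y$, so that the first summand $(\skalar{R,y}^+)^\kappa$ in \eqref{K2} may be replaced by $(\skalar{M^\top R+Q,y}^+)^\kappa$, keeping the same pair $(M^\top,R)$ in the second summand. This turns the integrand of \eqref{K2} into that of \eqref{nondeg0}. In other words, the expression in \eqref{K2} is, in the sense of the implicit renewal theory of \cite{Goldie1991} and \cite{AM2010}, precisely the integrable difference displayed in \eqref{nondeg0}; the two integrands coincide.

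For the measures, by Lemma \ref{lemma:sigma} one has $\sigma_\kappa T_\kappa^*=\sigma_\kappa$, while Corollary \ref{corr!} provides $\sigma$ as, up to a scalar, the unique solution of $\sigma T_\kappa^*=\sigma$. Since $T_\kappa^*$ is quasi-compact and its only eigenvalue of modulus one is $1$, with one-dimensional eigenspace, the invariant measure is unique up to a positive factor, so $\sigma_\kappa=c\,\sigma$ with $c\in(0,\infty)$ (both measures being finite and nonzero). The integral in \eqref{nondeg0} therefore equals $c$ times the integral in \eqref{K2}, and Corollary \ref{corr!} places the latter in $(0,\infty)$; this gives \eqref{nondeg0}.

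The one delicate point is the integrand identification. Because the stationary $R$ is regularly varying with index $\kappa$, each of $\Erw{(\skalar{R,y}^+)^\kappa}$ and $\Erw{(\skalar{M^\top R,y}^+)^\kappa}$ is infinite, so I may not split either difference into two expectations, and under the naive coupling in which $M$ is drawn independently of $R$ the quantity $(\skalar{R,y}^+)^\kappa-(\skalar{M^\top R,y}^+)^\kappa$ is not even integrable. The role of the fixed-point substitution is exactly that it replaces this ill-defined difference by the local perturbation $(\skalar{M^\top R+Q,y}^+)^\kappa-(\skalar{M^\top R,y}^+)^\kappa$, whose two arguments differ only through the additive term $Q$ and which is integrable thanks to $\E\norm{Q}^\kappa<\infty$. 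Making this rigorous, for instance by truncating on $\{\norm{R}\le N\}$, splitting, and passing to the limit by dominated convergence, is where the only real care is required.
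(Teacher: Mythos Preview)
Your argument matches the paper's almost exactly: the identification $\sigma_\kappa=c\,\sigma$ via uniqueness of the $T_\kappa^*$-invariant measure is verbatim the paper's first step, and the idea of replacing $R$ by $M^\top R+Q$ in the first summand of \eqref{K2} is the paper's second step.

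The only divergence is in how the integrand substitution is justified. The paper invokes \cite[Lemma 9.4]{Goldie1991} to write
\[
\Erw{(\skalar{R,y}^+)^{\kappa}-(\skalar{M^\top R,y}^+)^{\kappa}}
= \int_0^\infty u^{\kappa-1}\bigl[\,\P{\skalar{R,y}>u}-\P{\skalar{M^\top R,y}>u}\,\bigr]\,du,
\]
which depends only on the \emph{marginal} laws of the two scalar random variables. Since $\skalar{R,y}\stackrel{d}=\skalar{M^\top R+Q,y}$, the substitution is immediate and there is nothing further to control. You already gesture at this (``in the sense of the implicit renewal theory of \cite{Goldie1991}''), and that is the clean route; commit to it.

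Your proposed truncation alternative, by contrast, has a hitch you did not address: after you split on $\{\norm{R}\le N\}$ and apply $R\stackrel{d}=M^\top R+Q$ to the first term, the truncation event becomes $\{\norm{M^\top R+Q}\le N\}$, which no longer matches the event $\{\norm{R}\le N\}$ still sitting in the second term. You are left with a boundary discrepancy of the form
\[
\Erw{(\skalar{M^\top R+Q,y}^+)^\kappa\bigl(\1[\{\norm{M^\top R+Q}\le N\}]-\1[\{\norm{R}\le N\}]\bigr)},
\]
and showing this tends to $0$ is not automatic (and in fact amounts to reproving the tail-integral identity). The Goldie representation sidesteps this entirely.
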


\begin{proof}
Due to Theorem \ref{theorem:vague:convergence}, $\sigma_\kappa T_\kappa^* = \sigma_\kappa$. But if \eqref{A4*} holds, this identifies $\sigma_\kappa$ up to scalar multiples, so $\sigma$ of Corollary \ref{corr!} is a scalar multiple of $\sigma_\kappa$. One may replace $R$ by $M^\top R+Q$, since
$$ \Erw{(\skalar{R,y}^+)^{\kappa}-(\skalar{M^\top R,y}^+)^{\kappa}} = \int_0^\infty u^{\kappa-1} \left[ \P{\skalar{R,y} >u} - \P{\skalar{M^\top R, y} >u} \right] du ,$$
see \cite[Lemma 9.4]{Goldie1991}. 
\end{proof}

\begin{lemma}
Under the assumptions of Theorem \ref{theorem:stable:laws} and with \eqref{A4*} in force, the limit laws are fully nondegenerate, i.e. for all $v \in S^\top$ $\Re C_{\kappa}(v)<0$.
\end{lemma}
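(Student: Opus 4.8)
The statement to prove is the stronger pointwise fact that $\Re C_\kappa(v)<0$ for \emph{every} $v$; as $s\mapsto\exp(s^\kappa C_\kappa(v))$ is a characteristic function one has a priori $\Re C_\kappa(v)\le 0$, so only strictness is at stake, and once it holds for all $v$ the set $\{v:\Re C_\kappa(v)<0\}=S$ is trivially not contained in a proper subspace, giving full nondegeneracy. My plan is to evaluate $\Re C_\kappa(v)$ in closed form and identify it with minus a one-step tail increment of the transpose recursion. I would insert $\Lambda_\kappa=\sigma_\kappa\otimes\llam_\kappa$, use the linearity of $W$ to write $h_v(sz)=\Erw{e^{is\skalar{v,W(z)}}}$ for $z\in S$, $s>0$, and note that the centering term is purely imaginary (also in the cases $\kappa=1$ and $\kappa\in(1,2)$), hence invisible to $\Re$. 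Evaluating the radial integral by the classical identity
\begin{equation*}
\int_0^\infty\bigl(\cos(sa)-\cos(sb)\bigr)\,s^{-1-\kappa}\,ds=c_\kappa\bigl(\abs{b}^\kappa-\abs{a}^\kappa\bigr),\qquad c_\kappa:=\int_0^\infty(1-\cos s)\,s^{-1-\kappa}\,ds>0,
\end{equation*}
with $a=\skalar{v,z+W(z)}$ and $b=\skalar{v,W(z)}$, and then taking expectations (the Fubini step is harmless since $a-b=\skalar{v,z}$ is bounded, and for $\kappa\in(1,2)$ since $\Erw{\abs{\skalar{v,W(z)}}}<\infty$, because $\rho(1)<1$), yields
\begin{equation*}
\Re C_\kappa(v)=c_\kappa\int_S\Erw{\abs{\skalar{v,W(z)}}^\kappa-\abs{\skalar{v,z+W(z)}}^\kappa}\,\sigma_\kappa(dz).
\end{equation*}

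Next I would pass to the transpose. Since $\skalar{v,\Pi_k z}=\skalar{M_1^\top\cdots M_k^\top v,z}$, the a.s.\ convergent series $\sum_{k\ge 1}M_1^\top\cdots M_k^\top v$ factors as $M_1^\top\bigl(\sum_{j\ge 0}M_2^\top\cdots M_{j+1}^\top v\bigr)$, hence equals in law $M^\top R_v^*$, where $R_v^*$ is the stationary solution of the transpose recursion with the deterministic input $v$, i.e. $R_v^*\stackrel{d}=M^\top R_v^*+v$ with $M$ independent of $R_v^*$. Therefore $\skalar{v,W(z)}=\skalar{M^\top R_v^*,z}$ and $\skalar{v,z+W(z)}=\skalar{M^\top R_v^*+v,z}$ jointly in law, and
\begin{equation*}
\Re C_\kappa(v)=-c_\kappa\int_S\Erw{\abs{\skalar{M^\top R_v^*+v,z}}^\kappa-\abs{\skalar{M^\top R_v^*,z}}^\kappa}\,\sigma_\kappa(dz)=:-c_\kappa\,J(v).
\end{equation*}

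It then remains to prove $J(v)>0$. This is precisely the quantity of Corollary \ref{corr!2} with the fixed nonzero vector $v$ in the role of $Q$, up to replacing positive parts by absolute values; the latter is harmless because $\sigma_\kappa$ is symmetric, $\sigma_\kappa(-\,\cdot)=\sigma_\kappa(\cdot)$, which follows from the fact that $T_\kappa^*$ commutes with $z\mapsto -z$ together with the uniqueness of the $T_\kappa^*$-stationary measure guaranteed by \eqref{A4*}. The observation that makes the transfer work is that $T_\kappa^*$, the eigenfunction $r$, and hence $\sigma_\kappa$ depend only on $M$ and not on the input of the recursion; only the tail constant of Proposition \ref{prop!} feels the input, and it is positive for input $v$ because Theorem \ref{theorem:old} applies to $(M^\top,v)$ (for a fixed $v\ne 0$ the assumptions on the input are trivial, e.g. \eqref{A6} holds as $\{(I-M^\top)r=v\}$ has probability $<1$ by the density assumption \eqref{A5}). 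With this, the proofs of Corollaries \ref{corr!} and \ref{corr!2} give $J(v)\in(0,\infty)$ verbatim, so $\Re C_\kappa(v)=-c_\kappa J(v)<0$ for all $v$.

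The main obstacle is this final positivity. Because $\kappa$ is the critical index, the two terms of $J(v)$ have infinite expectation separately, so one cannot split them and must treat the difference through Goldie's representation $\Erw{\abs{A}^\kappa-\abs{B}^\kappa}=\kappa\int_0^\infty u^{\kappa-1}(\P{\abs{A}>u}-\P{\abs{B}>u})\,du$ and the uniform tail control of Remark \ref{Lambda:tight}. Checking that the whole machinery (Proposition \ref{prop!} and Theorem \ref{theorem:old}) indeed transfers to the deterministic-input transpose recursion, with the \emph{same} measure $\sigma_\kappa$, is where the real care is needed; by contrast, the first two steps are routine once the cancellation inside the expectations is exploited.
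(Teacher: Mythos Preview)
Your argument is correct and follows essentially the same route as the paper: evaluate $\Re C_\kappa(v)$ via the polar decomposition $\Lambda_\kappa=\sigma_\kappa\otimes\llam_\kappa$ and the radial cosine integral, transpose to recognize the inner products as $\skalar{W^*v+v,z}$ and $\skalar{W^*v,z}$ with $W^*v+v$ solving the transpose RDE with deterministic input $v$, and then invoke Corollary~\ref{corr!2}. The only cosmetic difference is in passing from positive parts to absolute values: you use the symmetry $\sigma_\kappa(-\,\cdot)=\sigma_\kappa(\cdot)$ (deduced from uniqueness under \eqref{A4*}), whereas the paper applies Corollary~\ref{corr!2} once for $v$ and once for $-v$ and adds the two inequalities.
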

\begin{proof}
Notice that
\begin{eqnarray}
\Re C_{\kappa}(v)&=& \Re \bigg(
\int_{V}\big( e^{i \skalar {v,x}} - 1 \big)\E\big[ e^{i\skalar{v,{W(x)}}} \big] \Lambda_\kappa(dx)
\bigg)\nonumber\\
&=& \int_0^{\infty}
\int_{S}\mathbb{\mathbb{E}}\Big[ \cos\big( s\skalar{v,{W(w)+w}} \big)
- \cos\big( s\skalar{v,{W(w)}} \big)
\Big] \sigma_\kappa(dw)\frac {ds}{s^{\kappa+1}} \nonumber \\
 &=& C(\kappa)\cdot \int_{S} \mathbb{E}\Big[ \big| \skalar{ v,{W(w)+w}} \big|^{\kappa} - \big| \skalar{v,{W(w)}}
\big|^{\kappa} \Big]\sigma_\kappa(dw), \label{nondeg1}\end{eqnarray}
 for $C(\kappa) = \int_0^{\infty} \frac{\cos s-1}{s^{\kappa+1}}<0$. Let $W^*v:=\sum_{k=1}^\infty M_1^\top \cdots M_k^\top v$. Then it suffices to prove that
\begin{equation} \int_{S} \mathbb{E}\Big[ \big| \skalar{ W^*v + v,w} \big|^{\kappa} - \big| \skalar{W^*v,w}
\big|^{\kappa} \Big]\sigma_\kappa(dw) \label{nondeg2}\end{equation}
is positive.

Therefore, we use that $(M^\top, v)$ satisifies the assumptions of Theorem \ref{theorem:old}, and that $W^*v+v$ is a solution
of the random difference equation
\begin{equation}
\label{rde}
W^*v+v \stackrel{d}= M^\top (W^*v+v) + v.
\end{equation}


In this situation, Corollary \ref{corr!2} yields
\begin{equation} \label{nondeg3}
0 < \int_S \Erw{  \big( \skalar{ W^*v + v,w}^+ \big)^{\kappa} - \big( \skalar{W^*v,w}^+
\big)^{\kappa} } \sigma_\kappa(dw) < \infty,
\end{equation}
and, considering $-v$ instead of $v$ (notice that $\sigma_\kappa$ does not depend on $v$)
\begin{equation} \label{nondeg4}
0 < \int_S \Erw{  \big( \skalar{ W^*v + v,w}^- \big)^{\kappa} - \big( \skalar{W^*v,w}^-
\big)^{\kappa} } \sigma_\kappa(dw) < \infty.
\end{equation}
Since both integrals are finite, adding \eqref{nondeg3} and \eqref{nondeg4} implies that \eqref{nondeg2} is indeed positive. 
\end{proof}

\textsc{Acknowledgements}\\The major part of this work was done during two visits of the second author at the University of Wroclaw, Institute of Mathematics to which he wishes to express his gratitude for hospitality and a stimulating atmosphere. \\

{\blue The authors are grateful to the referee for a very careful reading
of the manuscript and useful remarks that lead to the improvement
of the presentation.}

\nocite{BDGHU2009,GL2008,GL2012,GGL2011}

\end{document}